\newcommand{\stkout}[1]{\ifmmode\text{\sout{\ensuremath{#1}}}\else\sout{#1}\fi}
\def\N{\mathbb{N}}
\newcommand{\calO}{\mathcal{O}}
\newcommand{\calB}{\mathcal{B}}
\newcommand{\calL}{\mathcal{L}}
\newcommand{\bsr}{{\boldsymbol{r}}}
\newcommand{\vf}{{\bf f}}
\newcommand{\bg}{{\bf g}}
\newcommand{\vu}{{\bf u}}
\newcommand{\vv}{{\bf v}}
\newcommand{\bchi}{\boldsymbol \chi}
\newcommand{\bvphi}{\boldsymbol \varphi}
\newcommand{\balpha}{\boldsymbol \alpha}
\newcommand{\bbeta}{\boldsymbol \beta}
\newcommand{\vV}{{\bf V}}
\newcommand{\vw}{{\bf w}}
\newcommand{\bsx}{{\boldsymbol{x}}}
\newcommand{\bsy}{{\boldsymbol{y}}}
\newcommand{\bsz}{{\boldsymbol{z}}}
\newcommand{\vsigma}{\boldsymbol{\sigma}}
\newcommand{\R}{\mathbb{R}}
\newcommand{\calS}{\mathcal{S}}
\numberwithin{equation}{section}
\newcommand{\veps}{{\boldsymbol{\varepsilon}}}
\newcommand{\iprod}[1]{\langle#1 \rangle}
\newtheorem{theorem}{Theorem}[section]
\numberwithin{theorem}{section}
\newcommand{\TheTitle}{Quasi-Monte Carlo sparse grid Galerkin finite element methods for linear elasticity equations with uncertainties}
\newcommand{\TheAuthors}{Dick, Le Gia, Mustapha  and Tran}
\title{{\TheTitle}\thanks{This work was supported by the Australian Research Council
grant DP220101811.}}
\author{M. Clarke, J. Dick, Q. T. Le Gia, K. Mustapha and T. Tran \thanks{School of Mathematics and Statistics, University of New South Wales, Sydney, Australia}}
\begin{document}
\maketitle
\begin{abstract}
We explore  a linear inhomogeneous elasticity equation with random Lam\'e parameters. The latter  are parameterized by a countably infinite number of terms in separated expansions. The main aim of this work is to estimate expected values (considered as an infinite dimensional integral on the parametric space corresponding to the random coefficients) of linear functionals acting on the solution of the elasticity equation. To achieve this,  the expansions of the random parameters are truncated, a high-order quasi-Monte Carlo (QMC) is combined with a sparse grid approach to approximate the high dimensional integral, and a Galerkin finite element method (FEM) is introduced to approximate the solution of the elasticity equation over the physical domain.  The error estimates from (1) truncating the infinite expansion, (2) the Galerkin FEM, and (3) the QMC sparse grid quadrature rule are all studied. For this purpose, we show certain required regularity properties of the continuous solution with respect to both the parametric and physical variables. To achieve our theoretical regularity and convergence results,  some reasonable assumptions on the expansions of the random coefficients are imposed. Finally, some numerical results are delivered.   
\end{abstract}
\section{Introduction}
In this work we investigate  and analyze the application of  the high-order  quasi-Monte Carlo (QMC) sparse  grid method combined with the conforming  Galerkin finite element  methods (FEMs) to solve a linear elastic model with uncertainties (see \cite{Mathies1997} for an interesting overview of how to incorporate uncertainty into material parameters in linear elasticity problems).  More specifically,  we consider the case  where the properties of the elastic inhomogeneous material  are varying spatially in an uncertain way by using random Lam\'e parameters which are  parametrized by a countably infinite number of parameters. This leads to randomness in both  the Young modulus ($E$) and the Poisson ratio ($\nu$). We intend to measure theoretically the efficiency of our numerical algorithm through the expectation of the random solution over the random field. 

The equation governing small elastic deformations of a body $\Omega$ in~$\R^d$ ($d\in \{2,3\}$) with polyhedral
boundary can be written as
\begin{equation}
 -\nabla \cdot \vsigma(\bsy,\bsz;\vu(\bsx,\bsy,\bsz)) = \vf(\bsx) \quad \text{for } \bsx \in \Omega, \label{eq:L1}
\end{equation}
subject to homogeneous Dirichlet boundary conditions; $\vu(\bsx,\bsy,\bsz) = 0$ for $\bsx \in \Gamma:=\partial \Omega$ and with $\bsy$ and $\bsz$ being parameter vectors describing randomness to be specified later. The parametric Cauchy stress tensor $\vsigma(\bsy,\bsz;\cdot) \in [L^2(\Omega)]^{d\times d}$ is defined as 
\[\vsigma(\bsy,\bsz;\vu(\bsx,\bsy,\bsz)) =\lambda(\bsx,\bsz)\Big(\nabla\cdot \vu(\bsx,\bsy,\bsz)\Big) I 
+ 2\mu(\bsx,\bsy) \veps(\vu(\bsx,\bsy,\bsz)),\] 
with  $\vu(\cdot, \bsy, \bsz)$ being the displacement vector  field of dimension $d$, and the symmetric strain tensor
 $\veps(\vu) := (\nabla \vu + (\nabla \vu)^T)/2\,.$
Here, $\vf  \in [L^2(\Omega)]^d$ is the body force per unit volume and $I$ is the identity tensor.   The gradient ($\nabla$) and the divergence ($\nabla \cdot$) are understood to be with respect to the physical variable $\bsx \in \Omega$.  The Lam\'{e} elasticity parameter $\lambda$ is related to the compressibility of the material, and the shear modulus $\mu$ is related to how the material behaves under normal stress and shear stress, for the material $\Omega$ containing uncertainties. To parametrize these uncertainties, we assume that $\mu = \mu(\bsx,\bsy)$ and $\lambda = \lambda(\bsx,\bsz)$ can be expressed in the following  separate expansions 
\begin{equation}\label{KLexpansion}
 \mu(\bsx,\bsy) = \mu_0(\bsx) +\sum_{j=1}^\infty y_j \psi_j(\bsx)~~{\rm and}~~ 
 \lambda(\bsx,\bsz) = \lambda_0(\bsx) +\sum_{j=1}^\infty z_j \phi_j(\bsx), \quad\bsx\in\Omega,
\end{equation}
where $\{\psi_j\}$ and $\{\phi_j\}$ are orthogonal basis functions for the $L^2(\Omega)$ space. The parameter vectors $\bsy = (y_j)_{j\ge1}$ and $\bsz = (z_k)_{k\ge1}$ belong to $U:=(-\frac{1}{2},\frac{1}{2})^\N$,  consist of a countable number of parameters $y_j$ and $z_k$, respectively,  which are assumed to be i.i.d. uniformly distributed. Using independent random fields for $\lambda$ and $\mu$ in our model assumes that the compressibility and behaviour under stress of the material are, in the range of the parameters of the random $\lambda$ and $\mu$, independent.

The model problem \eqref{eq:L1}  is similar to the one  in \cite{HoangNguyenXia2016, XiaHoang2014}, in which a priori analysis for so-called best $N$-term approximations of standard two-field mixed formulations was investigated. For the well-posedness of \eqref{eq:L1}, we assume that  there are some positive  numbers $\mu_{\min},$ $\mu_{\max}$ and $\lambda_{\max}$  so that 
\begin{equation}\label{ass A2}
0 < \mu_{\min} \le \mu(\bsx,\bsy) \le \mu_{\max}~~{\rm and}~~0 \le \lambda(\bsx,\bsz)\le \lambda_{\max}, \quad \bsx \in \Omega,\;\; \bsy, \bsz \in U.
\tag{A1}
\end{equation}
Due to \eqref{ass A2},  the values of the Poisson ratio of the elastic material $\nu=\frac{\lambda}{2(\lambda+\mu)}$ are ranging between $0$ and $1/2$ which is the case in most materials. Indeed, if $\lambda$ is a constant multiple of $\mu$, that is, the randomnesses  of the Lam\'e parameters are due to the ones in the Young modulus $E$, then $\nu$ is constant. This case was studied in \cite{Khan2019,Khan2021} where the authors introduced a three-field PDE model with a parameter-dependent $E$ which is amenable to discretization by stochastic Galerkin mixed FEMs. The focus in \cite{Khan2019} was on  efficient linear algebra, while an a posteriori error estimation was detailed in \cite{Khan2021}. 
In relation, the authors in \cite{Eigel2014} presented a framework for residual-based a posteriori error estimation and adaptive stochastic Galerkin approximation.  

 If  $\nu$ approaches $1/2$, then we are dealing with an elastic material that becomes nearly incompressible. In this case, and with constant Lam\'e parameters,  the convergence rate of  the conforming piecewise quadratic or cubic Galerkin FEMs is  short by one from being optimal \cite{ScottVogelius1985,Vogelius1983}.
However, the piecewise linear  Galerkin FEM
runs into trouble with the phenomenon of locking where the convergence rates may deteriorate as $\lambda$   becomes too large. This is owing to their inability to represent non-trivial divergence-free displacement fields. 
Locking   can be avoided by using a nonconforming Galerkin FEM  \cite{Vogelius1983, BrennerSung1992,Falk1991}   or by using  mixed FEMs.  These and several other methods were studied very extensively in the existing literature  for the case of constant Lam\'e parameters, we refer the reader to the following books  \cite{Braess2007, BrennerScott2008, BrezziFortin1991}. Investigating the nearly incompressible case with stochastic  Lam\'e parameters   is beyond the scope of this paper; it is a topic of future research. 

Outlines of the paper. The next section is devoted to the statement of the main results of this work.  In Section \ref{VFR}, we derive the variational formulation of the parametric model problem  \eqref{eq:L1}, and prove the existence and uniqueness of the weak solution.  We also investigate certain regularity properties of the continuous solution $\vu$ with respect to the random parameters $\bsy$ and $\bsz$, and the physical parameter $\bsx.$ These results are needed to guarantee the convergence of the errors from both the QMC integration and the Galerkin finite element discretization.  The error from truncating the infinite series expansion in \eqref{KLexpansion} is investigated in Section~\ref{Sec: Truncation}. In  Section \ref{Sec: FEM}, for every $\bsy,\bsz \in U,$ we  approximate the parametric solution $\vu(\cdot,\bsy,\bsz)$ of \eqref{eq:L1} over the physical domain $\Omega$ using the conforming Galerkin FEM,  and discuss the stability and error estimates. In Section \ref{sec: QMC errors}, we investigate the high-order QMC error from  estimating the expected value  of a given function over a high dimensional field.  Firstly, we use a high-order QMC rule for the random coefficients arising from the expansion for $\lambda$ and another QMC rule for the random coefficients arising from the expansion for $\mu$. We study two ways of combining the QMC rules, one is a tensor product structure (Theorem~\ref{prop:qmc}) and the other one is a sparse grid combination (Theorem~\ref{qmc sparse grid}). Secondly,  we use one family of high-order QMC rule to simulate both $\lambda$ and $\mu$ simultaneously (Theorem \ref{prop:qmc6.3}). Designing such a QMC rule might be more complicated, however, it leads to a better rate of convergence.  We end the paper with some numerical simulations in Section \ref{Sec: Numeric}. In a sample of four different examples, we illustrate numerically the achieved theoretical finite element QMC convergence results.  

\section{Main results}
We start this section by introducing  the following vector function  spaces and the associated norms, which we will be using throughout the remainder of the paper.   Let   $\vV:=[H^1_0(\Omega)]^d$ and the associated norm be   $\|\vw \|_{\vV}:=\Big(\sum_{i=1}^d \|w_i\|_{H^1(\Omega)}^2 \Big)^{1/2}$  with the $w_i$'s being the components of the vector function $\vw.$  For $J=0,1,2,\cdots,$ the norm on the vector Sobolev space ${{\bf H}^J}:=[H^J(\Omega)]^d$, denoted by $\|\cdot\|_{{{\bf H}^J}},$ is defined in a similar fashion with $\|w_i\|_{H^J(\Omega)}$ in place of $\|w_i\|_{H^1(\Omega)}.$ We dropout ${\bf H}^J$ from the norm notation on the space ${\bf H}^0={\bf L}^2(\Omega):=[L^2(\Omega)]^d$ for $d \ge 1.$ Here,   $H^1_0(\Omega)$ and $H^J(\Omega)$ are the usual Sobolev spaces. Finally,  $\vV^*$ is denoted  the dual space of $\vV$ with respect to the ${\bf L}^2(\Omega)$  inner product, with norm denoted by $\|\cdot\|_{\vV^\ast}$. 

As mentioned earlier, and more precisely, we are interested in efficient  approximation of the expected value of the function $\calL(\vu(\cdot,\bsy,\bsz)),$ for a certain linear functional $\calL:{\bf L}^2(\Omega)  \to \R$, with respect to the random variables $\bsy$ and $\bsz,$ where $\vu$ is the solution of \eqref{eq:L1}. In other words,  we seek to approximate 
\begin{equation}\label{EV}
\Xi_\vu:=\int_{U}\int_{U}\calL\big( \vu(\cdot,\bsy,\bsz)\big)\,d\bsy\,d\bsz\,,
\end{equation}
where $d\bsy$ and $d\bsz$ are the uniform probability measures on $U$. As a practical  example, we may choose $\calL$  to be a local  continuous average on some domain $\Omega_0 \subset \Omega.$   

To accomplish the above task, and for the practical implementation, the occurred infinite sums in \eqref{KLexpansion} must be truncated. Then,   we approximate  $\vu$ by $\vu_{\bf s}$ which is the solution of \eqref{eq:L1} obtained by truncating the  infinite expansions in \eqref{KLexpansion} where  ${\bf s}=(s_1,s_2)$, that is, assuming that $y_j=z_k=0$ for $j >s_1$ and $k>s_2$. Then, with    $U_i=[0,1]^{s_i}$ for $i=1,2,$ being of (finite) fixed dimension $s_i$,  we estimate the expected value of $\calL(\vu(\cdot,\bsy,\bsz))$ by  approximating 
\begin{equation}\label{F finite}
\Xi_{{\bf s},\vu_{\bf s}}:=\int_{U_2}\int_{U_1}\calL\Big({\vu_{{\bf s}}}\Big(\cdot,\bsy-{\bf \frac{1}{2}},\bsz-{\bf \frac{1}{2}}\Big)\Big)d\bsy\,d\bsz\,.
\end{equation}
In the above finite dimensional integral,  $d\bsy$ and $d\bsz$ are the uniform probability measures on $U_1$ and $U_2$, respectively.  The shifting of the coordinates by ${\bf \frac{1}{2}}$ translates $U_i$ to  $\big[-\frac{1}{2},\frac{1}{2}\big]^{s_i}$ for $i=1,2.$ We approximate such $(s_1+s_2)$-dimensional integrals using a high-order QMC quadrature. Preceding this, we intend to solve the truncated problem over the physical domain $\Omega$ numerically via a continuous  Galerkin  FEM. So,  for every $\bsy \in U_1$ and $\bsz \in U_2,$ we approximate  the truncated solution  $\vu_{\bf s}\Big(\cdot,\bsy-{\bf \frac{1}{2}},\bsz-{\bf \frac{1}{2}}\Big)$  by the parametric spatial Galerkin finite element solution  $\vu_{{\bf s}_h}\Big(\cdot,\bsy-{\bf \frac{1}{2}},\bsz-{\bf \frac{1}{2}}\Big) \in \vV_h\subset \vV$ (see Section \ref{Sec: FEM} for the definition of the finite element space $\vV_h$)  with the sums in \eqref{KLexpansion} truncated to $s_1$ and $s_2$ terms, respectively. In the third step, we  estimate the expectation of the approximation using first a tensor product of two high-order QMC methods. In summary, we approximate the expected value in \eqref{EV} by the following truncated QMC Galerkin finite element rule 
\begin{equation}\label{eq:QMCInt F}
\Xi_{\vu_{{\bf s}_h}, Q}:=\frac{1}{N_1\,N_2}\sum_{j=0}^{N_1-1} \sum_{k=0}^{N_2-1}\calL\Big(\vu_{{\bf s}_h}\Big(\cdot,\bsy_j-{\bf \frac{1}{2}},\bsz_k-{\bf \frac{1}{2}}\Big)\Big)\,,
\end{equation}
where the QMC points $\{\bsy_0, \ldots,\bsy_{N_1-1} \} \in U_1$ and $\{\bsz_0, \ldots,\bsz_{N_2-1} \} \in U_2$. Noting that, as a better alternative of the above QMC tensor product rule,  we discuss an efficient  high-order QMC sparse grid combination (see Theorem \ref{qmc sparse grid}) and also a direct high-order QMC sparse grid rule in $(s_1+s_2)$ dimension (see Theorem \ref{prop:qmc6.3}).    

We have  three sources of error: a dimension truncation error depending on $s_1$ and $s_2$, a Galerkin discretization error depending on the  maximum finite element mesh diameter $h$ of the domain $\Omega$, and a QMC quadrature error which depends on $N_1$ and $N_2$. We split the error as:
\begin{equation}\label{combine}
|\Xi_\vu-\Xi_{\vu_{{\bf s}_h}, Q}|\le |\Xi_\vu-\Xi_{{\bf s}, \vu_{\bf s}}|+|\Xi_{{\bf s},\vu_{\bf s}}-\Xi_{{\bf s},\vu_{{\bf s}_h}}|+|\Xi_{{\bf s},\vu_{{\bf s}_h}}-\Xi_{\vu_{{\bf s}_h}, Q}|.
\end{equation}
Since $d\bsy$ and $d\bsz$ are the uniform probability measures with  i.i.d. uniformly distributed parameters on $U$, 
\[\Xi_\vu-\Xi_{{\bf s}, \vu_{\bf s}}
=\int_{U}\int_{U}\calL\Big(\vu(\cdot,\bsy,\bsz)-\vu_{{\bf s}}(\cdot,\bsy^{s_1},\bsz^{s_2})  \Big)\,d\bsy\,d\bsz,\]
where $\bsy = (y_j)_{j\ge1}, \bsz = (z_k)_{k\ge1} \in U$, and the truncated vectors  $\bsy^{s_1}$ and  $\bsz^{s_2}$ are $(y_1,y_2,\cdots,y_{s_1},0,0,\cdots)$ and $(z_1,z_2,\cdots,z_{s_2},0,0,\cdots)$, respectively. To estimate this term, we refer to the dimension truncation error which is analyzed in Theorem \ref{Truncating error}.  To reduce the errors from such a truncation, which is necessary from a practical point of view, we assume that the $L^2(\Omega)$ orthogonal basis functions  $\psi_j$ and $\phi_j$  are ordered
so that $\|\psi_j\|_{L^\infty(\Omega)}$ and $\|\phi_j\|_{L^\infty(\Omega)}$ are  nonincreasing. That is, 
\begin{equation}\label{ass A5}
\| \psi_j \|_{L^\infty(\Omega)} \ge
\| \psi_{j+1} \|_{L^\infty(\Omega)} ~~{\rm and}~~ \| \phi_j \|_{L^\infty(\Omega)} \ge
\| \phi_{j+1} \|_{L^\infty(\Omega)},\quad {\rm for}~~j\ge 1\,.
\tag{A2}
\end{equation}
For the convergence from the series truncation,  we assume that   $\mu_0,\,\lambda_0  \in L^\infty(\Omega)$, and 
  \begin{equation}\label{ass A1}
 \sum_{j=1}^\infty \|\psi_j\|^p_{L^\infty(\Omega)} < \infty~~{\rm and}~~
 \sum_{j=1}^\infty \|\phi_j\|^q_{L^\infty(\Omega)} < \infty,\quad{\rm for~some}~~0<p,\,q\le 1\,.
\tag{A3}
\end{equation}
When $p=1$ and$/$or $q=1,$ it is essential to have    
  \begin{equation}\label{ass A3}
 \sum_{j=s_1+1}^\infty \|\psi_j\|_{L^\infty(\Omega)} \le  C s_1^{1-1/\varrho_1}~~{\rm and/or} 
 \sum_{j=s_2+1}^\infty \|\phi_j\|_{L^\infty(\Omega)} \le  Cs_2^{1-1/\varrho_2},
\tag{A4}
\end{equation}
for some $0<\varrho_1,\varrho_2<1$.  The second term on the right-hand side of  \eqref{combine} is the finite dimensional integral of the  linear functional $\calL$ acting on the difference between the truncated solution $\vu_{\bf s}$ and its approximation $\vu_{{\bf s}_h}$. This can be deduced from Theorem \ref{Convergence theorem} by replacing the vectors $\bsy$ and $\bsz$ with $\bsy^{s_1}$ and $\bsz^{s_2}$, respectively,  and using the fact that $\vu_{\bf s}$ satisfies the regularity properties in Theorem \ref{lem: vu bound}. The Lam\'e parameters $\mu(\cdot,\bsy)$ and $\lambda(\cdot,\bsz)$ are required to be in the Sobolev space $W^{\theta,\infty}(\Omega)$ for every $\bsy,\,\bsz  \in U$ and for some integer $1\le \theta\le r$ ($r$ is the degree of the finite element solution, so $\theta=1$ in the case of piecewise linear Galerkin FEM). To have this, we assume that 
\begin{equation}\label{ass A4}
\mu_0,\,\lambda_0  \in W^{\theta,\infty}(\Omega),~~ \sum_{j=1}^\infty \|\psi_j\|_{W^{\theta,\infty}(\Omega)}~~ {\rm and}~~ \sum_{j=1}^\infty \| \phi_j\|_{W^{\theta,\infty}(\Omega)}~{\rm are~finite}\,.
\tag{A5}
\end{equation}
It is clear from Theorem \ref{Convergence theorem} that for every $(\bsy,\bsz) \in U_1\times U_2$, $|\calL(\vu_{\bf s}-\vu_{{\bf s}_h})|$ converges faster than  $\|\vu_{\bf s}-\vu_{{\bf s}_h}\|_\vV$ provided that the linear functional  $\calL$ is  bounded in the $L^2(\Omega)$ sense (that is, $|\calL(\vw)|\le \|\calL\|\|\vw\|$ for any $\vw \in {\bf L}^2(\Omega)$), which is not always guaranteed. 
  
The third term in \eqref{combine} is the QMC quadrature error which can be estimated by applying Theorem \ref{prop:qmc} or Theorem~\ref{qmc sparse grid} or Theorem \ref{prop:qmc6.3} with ${F}(\bsy,\bsz):=\calL\Big(\vu_{{\bf s}_h}\Big(\cdot,\bsy-{\bf \frac{1}{2}},\bsz-{\bf \frac{1}{2}}\Big)\Big).$ Noting that (the mixed derivative)  $|\partial_{\bsy,\bsz}^{\balpha,\bbeta}{F}(\bsy,\bsz)|\le \|\calL\|_{\vV*} \,\|\partial_{\bsy,\bsz}^{\balpha,\bbeta}\vu_{{\bf s}_h}\big(\cdot,\bsy-{\bf \frac12},\bsz-{\bf \frac12}\big)\|_{\vV}$, and then, Theorem \ref{lem: vu bound y z} can be applied to verify the regularity conditions in \eqref{eq:like-norm} 
and \eqref{eq:like-norm mixed}    which are necessary for the QMC (full and sparse grid) error results (Theorems \ref{prop:qmc} and \ref{qmc sparse grid}) and the $(s_1+s_2)$-dimensional QMC error results (Theorem \ref{prop:qmc6.3}), respectively. Here, Assumptions \eqref{ass A5} and \eqref{ass A1} (for $p=q=1$ only) are needed.

We summarize the combined error estimate in the next theorem. In addition to Assumptions \eqref{ass A2}--\eqref{ass A4}, we assume that the physical domain $\Omega$  is  ${\mathcal C}^{\theta,1}$ or the boundary of $\Omega$ is of class ${\mathcal C}^{\theta+1}$ for some integer $\theta\ge 1$ (for $\theta=1$, $\Omega$ can be convex instead)  and the body force  vector function $\vf$ belongs to ${\bf H}^{\theta-1}(\Omega)$ (recall that ${\bf H}^0(\Omega)={\bf L}^2(\Omega)$). These additional assumptions are needed to guarantee  that the strong solution $\vu$ of \eqref{eq:L1}  is in the space ${\bf H}^{\theta+1}(\Omega)$, see Theorem \ref{lem: vu bound}. This property is essential for the optimal convergence of the finite element solution of \eqref{eq:L1} over $\Omega.$   Throughout the paper, $C$ is a generic constant that is independent of  $h$, the number of QMC points $N_i$, and the dimension $s_i$ for $i=1,\,2,$   but may depend on the physical domain $\Omega$ and other parameters that will be mentioned accordingly. 
\begin{theorem}\label{main results}
Let $\vu$ be the solution of problem \eqref{eq:L1} and $\vu_{{\bf s}_h}$ be the Galerkin finite element solution of degree $\le r$ (with $r\ge 1$)  defined as in \eqref{FE solution} with $y_j=z_k=0$ for $j>s_1\ge 1$ and $k>s_2\ge 1.$  For  $i=1,\,2$,  let  $N_i = b^{m_i}$ with $m_i$ being positive integers and $b$ being prime. Then one can construct two interlaced polynomial lattice rules of order $\alpha \,:=\, \lfloor 1/p \rfloor +1$ with $N_1$ points, and of order $\beta:=\, \lfloor 1/q \rfloor +1$ with $N_2$ points where $|m_1 q - m_2 p| < 1$, and $p$ and $q$ are those in \eqref{ass A1}, such that the following QMC Galerkin finite element error bound holds: for $1\le \theta\le r,$
\[
    |\Xi_\vu-\Xi_{\vu_{{\bf s}_h}, Q}|\le C \, h^{\theta+1} \|\vf\|_{{\bf H}^{\theta-1}}  \|\calL\|
  +  C\,\Big(s_1^{1-\max(1/p,1/\varrho_1)}+ s_2^{1-\max(1/q,1/\varrho_2)}+ N^{-\frac{1}{p+q}} \Big)\|\vf\|_{\vV^*} \|\calL\|_{\vV^*}\,,
\]
where $N = N_1 N_2$ is the total number of QMC quadrature points. The constant $C$ depends on $b,p,q,\lambda,$ and $\mu$, but is independent of $s_i$ and $m_i$ for $i\in \{1,2\},$ and  $h$.

If we use a QMC rule in dimension $s_1 + s_2$ directly with $N$ points (see Theorem~\ref{prop:qmc6.3}), then the above estimate remains valid with  $N^{-\min(1/p, 1/q)}$ in place of $N^{-\frac{1}{p+q}}$. Further, there exists a combined QMC sparse grid approximation given by \eqref{eq: truncate infinite sums} (see Theorem~\ref{qmc sparse grid}) such that the above error bound remains valid with $M^{-\min(1/p, 1/q)/2}$ in place of $N^{-\frac{1}{p+q}}$, where $M$ is the total number of quadrature points in the QMC sparse grid approximation. The constant $C$ in the new bound  depends on $b,p,q,\lambda,$ and $\mu$, but is independent of $s_1, s_2$, $M$, and $h$.
\end{theorem}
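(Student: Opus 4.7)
The plan is to use the triangle-inequality splitting \eqref{combine} and bound each of the three terms by invoking the three corresponding theorems already proved earlier in the paper, then collect the pieces into the announced estimate. Assumptions \eqref{ass A2}--\eqref{ass A4} are precisely what those theorems require, so the job is bookkeeping plus a short verification that the abstract hypotheses on the integrand $F$ are satisfied in each QMC variant.

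First I would dispose of the truncation term $|\Xi_\vu-\Xi_{{\bf s},\vu_{\bf s}}|$. Writing it as $\int_U\int_U \calL(\vu(\cdot,\bsy,\bsz)-\vu_{\bf s}(\cdot,\bsy^{s_1},\bsz^{s_2}))\,d\bsy\,d\bsz$ and applying Theorem~\ref{Truncating error} gives the bound $C(s_1^{1-\max(1/p,1/\varrho_1)}+s_2^{1-\max(1/q,1/\varrho_2)})\|\vf\|_{\vV^*}\|\calL\|_{\vV^*}$, which matches the first half of the claimed $\vV^*$-factor contribution. Here the ordering assumption \eqref{ass A5} and the summability \eqref{ass A1} (plus \eqref{ass A3} when $p=1$ or $q=1$) supply exactly the decay of the expansion tails that Theorem~\ref{Truncating error} needs.

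For the finite element term $|\Xi_{{\bf s},\vu_{\bf s}}-\Xi_{{\bf s},\vu_{{\bf s}_h}}|$ I would pull the absolute value inside the integral, bound the integrand for each fixed $(\bsy,\bsz)\in U_1\times U_2$ by the pointwise FEM estimate of Theorem~\ref{Convergence theorem}, and integrate against the probability measure. This requires the strong-solution regularity $\vu_{\bf s}(\cdot,\bsy,\bsz)\in{\bf H}^{\theta+1}$, uniformly in the truncated parameters, which Theorem~\ref{lem: vu bound} delivers under the domain and right-hand-side regularity stated in the theorem and assumption \eqref{ass A4}. Since $|\calL(\vw)|\le\|\calL\|\|\vw\|$ on $\mathbf{L}^2$, the Aubin--Nitsche-type duality embedded in Theorem~\ref{Convergence theorem} yields $h^{\theta+1}\|\vf\|_{{\bf H}^{\theta-1}}\|\calL\|$; the probability measure contributes a factor of one.

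The third term $|\Xi_{{\bf s},\vu_{{\bf s}_h}}-\Xi_{\vu_{{\bf s}_h},Q}|$ is the QMC quadrature error on the integrand $F(\bsy,\bsz)=\calL(\vu_{{\bf s}_h}(\cdot,\bsy-\tfrac12,\bsz-\tfrac12))$. Using $|\partial^{\balpha,\bbeta}_{\bsy,\bsz}F|\le\|\calL\|_{\vV^*}\|\partial^{\balpha,\bbeta}_{\bsy,\bsz}\vu_{{\bf s}_h}\|_{\vV}$ and the mixed-derivative bounds from Theorem~\ref{lem: vu bound y z}, I would verify the weighted-space hypotheses \eqref{eq:like-norm} and \eqref{eq:like-norm mixed} of the QMC theorems. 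Plugging these into Theorem~\ref{prop:qmc} with $\alpha=\lfloor 1/p\rfloor+1$ and $\beta=\lfloor 1/q\rfloor+1$ gives the tensor-product rate $N_1^{-1/p}+N_2^{-1/q}$; the balancing constraint $|m_1q-m_2p|<1$ equates the two contributions up to a bounded factor, and since $N=N_1N_2=b^{m_1+m_2}$ an elementary calculation turns this into $N^{-1/(p+q)}$. For the two alternative constructions the argument is identical: invoking Theorem~\ref{prop:qmc6.3} in place of Theorem~\ref{prop:qmc} directly yields $N^{-\min(1/p,1/q)}$, and invoking Theorem~\ref{qmc sparse grid} gives $M^{-\min(1/p,1/q)/2}$ with $M$ the total number of sparse-grid points.

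The only real obstacle is the QMC bookkeeping: one must check that the weights and shape parameters produced by Theorem~\ref{lem: vu bound y z} lie in the admissible ranges of Theorems~\ref{prop:qmc}, \ref{qmc sparse grid}, and \ref{prop:qmc6.3}, and that the implicit constants remain independent of $s_1,s_2$, $N$ (or $M$), and $h$; everything else is a summation of the three displayed bounds via the triangle inequality \eqref{combine}.
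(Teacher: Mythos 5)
Your proposal is correct and follows essentially the same route as the paper, which establishes Theorem~\ref{main results} exactly by the splitting \eqref{combine}, invoking Theorem~\ref{Truncating error} for the truncation term, Theorem~\ref{Convergence theorem} (with regularity from Theorem~\ref{lem: vu bound}) for the finite element term, and Theorems~\ref{prop:qmc}, \ref{qmc sparse grid}, \ref{prop:qmc6.3} applied to $F(\bsy,\bsz)=\calL\big(\vu_{{\bf s}_h}(\cdot,\bsy-\tfrac12,\bsz-\tfrac12)\big)$ with the derivative bounds of Theorem~\ref{lem: vu bound y z} verifying \eqref{eq:like-norm} and \eqref{eq:like-norm mixed}. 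The one point you flag only implicitly --- that the parametric-derivative bounds proved for the continuous solution carry over to the Galerkin solution $\vu_{{\bf s}_h}$ because $\vV_h\subset\vV$ and the recurrence argument runs verbatim in $\vV_h$ --- is treated with the same brevity in the paper itself.
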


\section{Weak  formulation and regularity}\label{VFR}
This section is devoted to deriving the weak formulation of the parametric elasticity equation  \eqref{eq:L1} for each value of the parameter $\bsy,\,\bsz \in U.$ Then we show some useful regularity properties of the weak solution with respect to both the  physical variable $\bsx$ and parametric variables $\bsy$ and $\bsz$. Preceding  this, we establish the existence and uniqueness of the weak solution. 

For the weak formulation of \eqref{eq:L1}, for every  $\bsy,\bsz \in U$, we multiply  both sides of \eqref{eq:L1} by a test function $\vv \in \vV$, and use Green’s formula and the given homogeneous Dirichlet boundary conditions after integrating over the physical domain $\Omega.$  Then, the usage of the identities 
\[\vsigma(\bsy,\bsz;\vu):\nabla \vv = \vsigma(\bsy,\bsz;\vu): \veps(\vv)=\lambda \nabla \cdot \vu \nabla \cdot \vv +2\mu \veps(\vu):\veps(\vv)\] (the colon operator is the inner product between tensors) results in the following parameter-dependent
weak formulation:  find $\vu(\cdot,\bsy,\bsz) \in \vV$ satisfying
\begin{equation}\label{para weak}
 \calB(\bsy,\bsz;\vu, \vv) = \ell(\vv), \quad \text{for all} \quad \vv \in \vV,\quad{\rm for~every}~~~\bsy,\,\bsz \in U,
\end{equation}
where  the bilinear form $\calB(\bsy,\bsz;\cdot,\cdot)$ and the linear functional $\ell(\cdot)$ are defined by 
\begin{equation}\label{eq: bilinear}
  \calB(\bsy,\bsz;\vu, \vv) := \int_\Omega [2\mu\, \veps(\vu):\veps(\vv)+\lambda \nabla \cdot \vu \nabla \cdot \vv] \,d\bsx~{\rm and}~
  \ell(\vv) := \iprod{\vf,\vv}:=\int_\Omega \vf \cdot \vv \,d\bsx.
\end{equation}

Next, we show the existence and uniqueness of the  solution of  \eqref{para weak}. Noting that, for a given vector function $\vv$,  
$\|\veps(\vv)\|=\Big(\int_\Omega \veps(\vv):\veps(\vv)\,d\bsx\Big)^{1/2}$ and similarly, the norm $\|\nabla \vv\|$ is defined.
\begin{theorem}\label{thm: unique solution}
Assume that \eqref{ass A2}
is satisfied. Then, for every $f \in \vV^*$ and $\bsy,\,\bsz  \in U$, the parametric weak formulation problem \eqref{para weak} has a unique solution. 
\end{theorem}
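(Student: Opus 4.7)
The strategy is a direct application of the Lax--Milgram lemma on the Hilbert space $\vV = [H^1_0(\Omega)]^d$, with all estimates made uniform in $(\bsy,\bsz) \in U$ via Assumption \eqref{ass A2}. First I would verify that $\ell$ is a bounded linear functional on $\vV$: this is immediate since $\vf \in \vV^*$ gives $|\ell(\vv)| \le \|\vf\|_{\vV^*}\|\vv\|_\vV$. Then I would check that $\calB(\bsy,\bsz;\cdot,\cdot)$ is a bounded bilinear form on $\vV \times \vV$ with a constant independent of $(\bsy,\bsz)$. Using the pointwise upper bounds $\mu(\bsx,\bsy) \le \mu_{\max}$ and $\lambda(\bsx,\bsz) \le \lambda_{\max}$ from \eqref{ass A2}, together with the Cauchy--Schwarz inequality and the elementary estimates $\|\veps(\vw)\| \le \|\nabla \vw\|$ and $\|\nabla\cdot\vw\| \le \sqrt{d}\,\|\nabla \vw\| \le \sqrt{d}\,\|\vw\|_\vV$, one gets
\begin{equation*}
|\calB(\bsy,\bsz;\vu,\vv)| \le (2\mu_{\max} + d\,\lambda_{\max})\,\|\vu\|_\vV\|\vv\|_\vV.
\end{equation*}

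The main step is coercivity, which is where the nontrivial analytic input enters. Since $\lambda(\bsx,\bsz) \ge 0$ by \eqref{ass A2}, the second term in $\calB(\bsy,\bsz;\vv,\vv)$ is nonnegative, so
\begin{equation*}
\calB(\bsy,\bsz;\vv,\vv) \ge 2\mu_{\min}\,\|\veps(\vv)\|^2.
\end{equation*}
I would then invoke Korn's first inequality on $\vV = [H^1_0(\Omega)]^d$, which gives a constant $C_K > 0$ (depending only on $\Omega$ and $d$) such that $\|\veps(\vv)\|^2 \ge C_K \|\nabla \vv\|^2$ for all $\vv \in \vV$, combined with the Poincar\'e inequality to pass from $\|\nabla \vv\|$ to the full norm $\|\vv\|_\vV$. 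This yields
\begin{equation*}
\calB(\bsy,\bsz;\vv,\vv) \ge C\,\mu_{\min}\,\|\vv\|_\vV^2
\end{equation*}
with $C$ independent of $\bsy,\bsz$. Crucially, the coercivity constant depends on $\mu_{\min}$ but not on $\lambda$, which is why only the one-sided bound $\lambda \ge 0$ (not a strictly positive lower bound) is needed in \eqref{ass A2}; this is the reason the argument accommodates the degenerate case $\lambda \equiv 0$.

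The hard part, in the sense of the only nonroutine ingredient, is the use of Korn's inequality: one must appeal to the standard result for $[H^1_0(\Omega)]^d$ on a Lipschitz (here polyhedral) domain $\Omega$, which I would simply cite rather than reprove. Once continuity, coercivity, and boundedness of $\ell$ are in hand with constants that are independent of $(\bsy,\bsz)$, Lax--Milgram immediately produces a unique $\vu(\cdot,\bsy,\bsz) \in \vV$ solving \eqref{para weak} for every fixed $(\bsy,\bsz) \in U$, together with the a priori bound $\|\vu(\cdot,\bsy,\bsz)\|_\vV \le C\,\|\vf\|_{\vV^*}/\mu_{\min}$, which will be reused in later sections.
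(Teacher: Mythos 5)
Your proposal is correct and follows essentially the same route as the paper: boundedness via the pointwise bounds in \eqref{ass A2} together with $\|\veps(\vw)\|\le\|\nabla\vw\|$ and $\|\nabla\cdot\vw\|\le\sqrt{d}\,\|\nabla\vw\|$, coercivity by dropping the nonnegative $\lambda$-term and invoking Korn's inequality, and then Lax--Milgram. The only difference is that you spell out the Poincar\'e step and the resulting a priori bound explicitly, which the paper leaves implicit here (the bound appears separately in Theorem \ref{lem: vu bound}).
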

\begin{proof}
Using  assumption \eqref{ass A2} and applying  the Cauchy-Schwarz inequality leads to  
\[|\calB(\bsy,\bsz;\vv, \vw)| \le 2 \mu_{\max}\|\veps(\vv)\|\,\|\veps(\vw)\|+\lambda_{\max} \|\nabla \cdot \vv\|\,\| \nabla \cdot \vw\|\,.\]
Hence, using the inequalities $\|\nabla \cdot \vw\| \le  \sqrt{d}\,\|\nabla \vw\|$ and $\|\veps(\vw)\| \le  
\|\nabla \vw\|$, we have
\begin{equation}\label{eq: bounded}
|\calB(\bsy,\bsz;\vv,\vw)| \le (d \lambda_{\max}+2 \mu_{\max})\|\nabla \vv\|\,\|\nabla \vw\|\le 
(d \lambda_{\max}+2 \mu_{\max})\| \vv\|_{\vV}\,\|\vw\|_{\vV},
\end{equation}
for any $\vv,\vw \in \vV$. So, the bilinear form $\calB(\bsy,\bsz;\cdot,\cdot)$ is bounded on $\vV \times \vV$. For the coercivity property of $\calB(\bsy,\bsz;\cdot,\cdot)$, we use again assumption \eqref{ass A2} in addition to  Korn's inequality to obtain 
\begin{equation}\label{eq: coer B}
 \calB(\bsy,\bsz;\vv,\vv)\ge  2 \mu_{\min} \|\veps(\vv)\|^2 \ge C \mu_{\min} \| \vv\|_{\vV}^2, \quad \vv \in \vV.
\end{equation}
Since $\ell(\cdot)$  is a bounded linear functional on $\vV$,  an application of the Lax-Milgram theorem completes the proof. 
\end{proof}

For the finite element error analysis, we discuss next some required regularity properties of the parametric solution of \eqref{para weak}. For the nearly incompressible case (which is beyond the scope of this work), one has to be more specific about the constant  $\widetilde C$ in the following theorem.    
\begin{theorem}\label{lem: vu bound}
Assume that \eqref{ass A2} is satisfied. Then, for every $\vf \in \vV^*$ and every $\bsy,\,\bsz  \in U$, the parametric weak  solution $\vu=\vu(\cdot,\bsy,\bsz)$ of  problem \eqref{para weak} satisfies
\begin{equation}\label{a priori}
  \|\vu(\cdot,\bsy,\bsz)\|_{\vV}  \le C\mu_{\min}^{-1} \|\vf\|_{\vV^*}\,.
\end{equation}
If  $\Omega$ is ${\mathcal C}^{\theta,1}$ (or the boundary of $\Omega$ is of class ${\mathcal C}^{\theta+1}$) for some integer $\theta\ge 1$ (for $\theta=1$, we may assume that $\Omega$ is convex instead),  then  $\vu(\cdot,\bsy,\bsz) \in \vV \cap {\bf H}^{\theta+1}(\Omega)$ (that is, it is a strong solution of problem \eqref{eq:L1}) provided that \eqref{ass A4} is satisfied and  $\vf \in {\bf H}^{\theta-1}(\Omega)$. Furthermore, 
\begin{equation}\label{a priori H2}
  \|\vu(\cdot,\bsy,\bsz)\|_{{\bf H}^{\theta+1}}
 \le \widetilde C\,\|\vf\|_{{\bf H}^{\theta-1}},\quad{\rm  for~ every}~~ \bsy,\,\bsz  \in U \,.
\end{equation} 
The constant $\widetilde C$   depends on $\Omega$, $\mu$,  $\lambda$, including $\|\mu(\cdot,\bsy)\|_{W^{\theta,\infty}(\Omega)}$ and $\|\lambda(\cdot,\bsz)\|_{W^{\theta,\infty}(\Omega)}$.
\end{theorem}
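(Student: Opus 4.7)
The plan is to prove \eqref{a priori} by a standard energy argument and to prove the regularity estimate \eqref{a priori H2} by invoking classical elliptic regularity for the variable-coefficient elasticity system, combined with a bootstrap in $\theta$.

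For \eqref{a priori}, I would take $\vv=\vu$ in \eqref{para weak}, apply the coercivity estimate \eqref{eq: coer B} on the left side, and bound the right side by $|\ell(\vu)|\le\|\vf\|_{\vV^*}\|\vu\|_{\vV}$. Cancelling one factor of $\|\vu\|_{\vV}$ yields the bound, with constant $C\mu_{\min}^{-1}$ where $C$ is the reciprocal of the Korn constant used in \eqref{eq: coer B}. This step is routine.

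For \eqref{a priori H2}, I would first rewrite \eqref{eq:L1} in the non-divergence form $-2\mu\,\nabla\!\cdot\!\veps(\vu)-\lambda\,\nabla(\nabla\!\cdot\!\vu)=\vf+2\veps(\vu)\nabla\mu+(\nabla\!\cdot\!\vu)\nabla\lambda$ in $\Omega$. Consider first the base case $\theta=1$. The principal part $\mathcal{A}\vu:=-2\mu\,\nabla\!\cdot\!\veps(\vu)-\lambda\,\nabla(\nabla\!\cdot\!\vu)$ is a strongly elliptic second-order system, since the Legendre–Hadamard condition holds uniformly thanks to $\mu\ge\mu_{\min}>0$. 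Under assumption \eqref{ass A4} with $\theta=1$, the coefficients $\mu$ and $\lambda$ are Lipschitz, so that the lower-order terms on the right lie in ${\bf L}^2(\Omega)$ with norm controlled by $\|\vu\|_{\vV}$ and by $\|\mu\|_{W^{1,\infty}}+\|\lambda\|_{W^{1,\infty}}$. The classical ${\bf H}^2$-regularity result for the linear elasticity system on a convex or $\mathcal{C}^{1,1}$ domain with homogeneous Dirichlet data (see e.g.\ Grisvard or Nečas) then gives $\vu\in{\bf H}^2(\Omega)$ together with $\|\vu\|_{{\bf H}^2}\le C\bigl(\|\vf\|+\|\vu\|_{\vV}\bigr)$; combining with \eqref{a priori} produces \eqref{a priori H2} for $\theta=1$.

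For $\theta\ge 2$, I would bootstrap: assuming the statement at level $\theta-1$, differentiate the rewritten equation $\theta-1$ times. The principal operator $\mathcal{A}$ remains the same, while derivatives falling on $\mu$ and $\lambda$ (which are in $W^{\theta,\infty}$ by \eqref{ass A4}) paired with $\vu\in{\bf H}^{\theta}$ yield an ${\bf H}^{\theta-1}$ right-hand side whose norm is controlled by $\|\vf\|_{{\bf H}^{\theta-1}}$ via the inductive hypothesis and the product rule. Applying the $H^{\theta+1}$-regularity theorem for strongly elliptic systems with $W^{\theta,\infty}$ coefficients on a $\mathcal{C}^{\theta+1}$ (or $\mathcal{C}^{\theta,1}$) domain closes the induction. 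Throughout I would track the constants to verify that they depend only on $\Omega$, $\mu_{\min}$, and the norms $\|\mu(\cdot,\bsy)\|_{W^{\theta,\infty}(\Omega)}$ and $\|\lambda(\cdot,\bsz)\|_{W^{\theta,\infty}(\Omega)}$, which are uniformly bounded in $(\bsy,\bsz)\in U$ by \eqref{ass A4}, yielding the asserted uniformity of $\widetilde C$.

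The main obstacle I anticipate is the base case $\theta=1$ in the convex polyhedral setting, since corner singularities for the elasticity system require care and $H^2$-regularity on convex polyhedra is not as classical as in the scalar Poisson case; I would rely on existing results for linear elasticity (e.g.\ Nicaise, Brenner–Sung) rather than reprove them. The remaining delicate point is the explicit tracking of constants through the bootstrap, but once the base case is secured each inductive step follows from a routine product and chain rule argument in ${\bf H}^{\theta-1}$.
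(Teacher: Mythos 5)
Your proposal is correct and follows essentially the same route as the paper: the a priori bound \eqref{a priori} is obtained by the identical energy/coercivity argument, and the ${\bf H}^{\theta+1}$ estimate is obtained by appealing to classical elliptic regularity for strongly elliptic systems. The paper simply cites Ciarlet, Grisvard and McLean at this point rather than carrying out the non-divergence rewriting and the bootstrap in $\theta$ that you sketch, so your version supplies (reasonable) detail the paper omits, including the legitimate caveat about $H^2$-regularity of the Lam\'e system on convex polyhedra.
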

\begin{proof}
From the coercivity property in \eqref{eq: coer B} and the weak formulation in \eqref{para weak}, we have  
\[
C \mu_{\min}\| \vu\|_{\vV}^2\le  2 \mu_{\min}  \|\veps(\vu)\|^2 \le   \calB(\bsy,\bsz;\vu,\vu)=\ell(\vu) \le \|\vf\|_{\vV^*} \|\vu\|_{\vV},\]
for every $\bsy,\,\bsz  \in U$. Thus, the proof of the regularity estimate in \eqref{a priori} is completed.

For every $\bsy,\,\bsz  \in U$, the operator $\nabla \cdot \vsigma$ in the elasticity equation \eqref{eq:L1} is strongly elliptic because the bilinear operator $\calB(\bsy,\bsz;\cdot,\cdot)$ is coercive on $\vV$. 
Thus, due to the imposed assumptions on $\Omega$, the solution $\vu(\cdot,\bsy,\bsz)$ of \eqref{para weak} is in the space ${\bf H}^{\theta+1}$ and satisfies the regularity property in \eqref{a priori H2}. See \cite[Theorem 6.3-6]{Cialert1988}, \cite[Theorems  2.4.2.5, 2.5.1.1, and 3.2.1.3]{Grisvard1985}  and \cite[Chapter 4]{McLean2000} for more details about strongly elliptic system and elliptic regularity. 
\end{proof}

In the QMC (see Theorem \ref{prop:qmc}) and QMC sparse grids (see Theorem \ref{qmc sparse grid}) error estimations, we need to bound the mixed first partial derivatives of the parametric displacement  $\vu$ with respect to the random variables $y_j$ and $z_k$. This will be the topic of the next theorem. For convenience,  we introduce  ${\calS}$ to be the set of (multi-index) infinite vectors  $\balpha=(\alpha_j)_{j\ge 1}$ with nonnegative integer entries such that  $|\balpha|:=\sum_{j\ge 1} \alpha_j<\infty.$ That is,  sequences of nonnegative integers for which only finitely many entries are nonzero. For  $\balpha=(\alpha_j)_{j\ge 1}$  and $\bbeta=(\beta_j)_{j\ge 1}$ belonging to  $\calS,$ the mixed partial derivative $\partial_{\bsy,\bsz}^{\balpha,\bbeta}$ is defined by   
\[\partial_{\bsy,\bsz}^{\balpha,\bbeta}:=\partial_{\bsy}^{\balpha}\partial_{\bsz}^{\bbeta}   =\frac{\partial^{|\balpha|}}{\partial_{y_1}^{\alpha_1}\partial_{y_2}^{\alpha_2}\cdots}\frac{\partial^{|\bbeta|}}{\partial_{z_1}^{\beta_1}\partial_{z_2}^{\beta_2}\cdots}\,. 
\]
It reduces to $\partial_{\bsy}^{\bbeta}$ and  $\partial_{\bsz}^{\balpha}$ when $|\bbeta|=0$ and $|\balpha|=0,$ respectively.  
\begin{theorem}\label{lem: vu bound y z}
Assume that  \eqref{ass A5} and \eqref{ass A1} (for $p=q=1$) are satisfied. Then,  for  $\vf \in \vV^*$, $\bsy,\bsz \in  U$, and $\balpha,\bbeta \in \calS$, the solution $\vu(\cdot,\bsy,\bsz)$ of the parametric weak problem \eqref{para weak} satisfies
\begin{equation}\label{mixed estimate}
\big\|\veps\big(\partial_{\bsy,\bsz}^{\balpha,\bbeta} \vu(\cdot,\bsy,\bsz)\big)\big\| \le 
(|\balpha|+|\bbeta|)! \
\widetilde {\bf b}^{\balpha} \widehat {\bf b}^{\bbeta}\|\veps(\vu)\|,
\end{equation} 
where 
\[\widetilde {\bf b}^{\balpha}=\prod_{i\ge 1} (\widetilde b_i)^{\alpha_i}~~{\rm and}~~ \widehat {\bf b}^{\bbeta}=\prod_{i\ge 1} (\widehat b_i)^{\beta_i},~~{\rm with}~~
 \widetilde b_j=\frac{\|\psi_j\|_{L^\infty(\Omega)}}{\mu_{\min}}~~{\rm and}~~ \widehat b_j=\frac{d}{2}\,\frac{\|\phi_j\|_{L^\infty(\Omega)}}{\mu_{\min}}\,.\]
  Consequently,
 \begin{equation}\label{mixed estimate 2}
 \|\partial_{\bsy,\bsz}^{\balpha,\bbeta} \vu(\cdot,\bsy,\bsz)\|_{\vV} \le 
C\mu_{\min}^{-1}(|\balpha|+|\bbeta|)! \
\widetilde {\bf b}^{\balpha} \widehat {\bf b}^{\bbeta}\|\vf\|_{\vV^*},
\end{equation}
where the constant $C$ depends on $\Omega$  only.
\end{theorem}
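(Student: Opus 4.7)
The plan is to derive a recursive bound for $\|\veps(\partial^{\balpha,\bbeta}_{\bsy,\bsz}\vu)\|$ by differentiating the parametric weak formulation \eqref{para weak}, and then deduce \eqref{mixed estimate} by induction on $n:=|\balpha|+|\bbeta|$. The key structural fact is that the expansions \eqref{KLexpansion} are \emph{affine} in the parameters, so the only nonzero derivatives of $\mu$ and $\lambda$ are the zeroth and first order ones, namely $\partial_{y_j}\mu=\psi_j$ and $\partial_{z_k}\lambda=\phi_k$ (all higher-order partials vanish). Assumption \eqref{ass A1} with $p=q=1$ guarantees that $\mu(\cdot,\bsy)$ and $\lambda(\cdot,\bsz)$ are smooth in the parameters with the differentiation under the integral being legitimate.

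I would first apply $\partial^{\balpha,\bbeta}_{\bsy,\bsz}$ to the identity $\calB(\bsy,\bsz;\vu,\vv)=\ell(\vv)$ with a parameter-independent test function $\vv\in\vV$. Since $\ell$ is independent of $(\bsy,\bsz)$ and the only surviving Leibniz terms are those where each parameter derivative hits either the factor $\mu$ (resp.\ $\lambda$) exactly once or not at all, a short calculation yields, for every $\vv\in\vV$ and every $(\balpha,\bbeta)$ with $n\ge 1$,
\begin{align*}
\calB(\bsy,\bsz;\partial^{\balpha,\bbeta}_{\bsy,\bsz}\vu,\vv)
&= -\sum_{j:\alpha_j\ge 1} 2\alpha_j\!\int_\Omega \psi_j\,\veps(\partial^{\balpha-\ve_j,\bbeta}_{\bsy,\bsz}\vu):\veps(\vv)\,d\bsx \\
&\quad -\sum_{k:\beta_k\ge 1} \beta_k\!\int_\Omega \phi_k\,\nabla\!\cdot\!\partial^{\balpha,\bbeta-\ve_k}_{\bsy,\bsz}\vu\;\nabla\!\cdot\!\vv\,d\bsx.
\end{align*}
Testing with $\vv=\partial^{\balpha,\bbeta}_{\bsy,\bsz}\vu$, invoking the coercivity bound $2\mu_{\min}\|\veps(\vv)\|^2\le \calB(\bsy,\bsz;\vv,\vv)$ from \eqref{eq: coer B}, using Cauchy--Schwarz together with the pointwise bounds by $\|\psi_j\|_{L^\infty}$ and $\|\phi_k\|_{L^\infty}$, and finally the inequality $\|\nabla\!\cdot\!\vw\|\le\sqrt{d}\,\|\veps(\vw)\|$ (from $|\tr A|^2\le d\,|A|^2$ for $A\in\R^{d\times d}$), I would divide through by $\|\veps(\partial^{\balpha,\bbeta}_{\bsy,\bsz}\vu)\|$ to obtain the recursion
\[
\|\veps(\partial^{\balpha,\bbeta}_{\bsy,\bsz}\vu)\|
\le \sum_{j}\alpha_j\,\widetilde b_j\,\|\veps(\partial^{\balpha-\ve_j,\bbeta}_{\bsy,\bsz}\vu)\|
+\sum_{k}\beta_k\,\widehat b_k\,\|\veps(\partial^{\balpha,\bbeta-\ve_k}_{\bsy,\bsz}\vu)\|.
\]

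From here, the proof of \eqref{mixed estimate} is a straightforward induction on $n=|\balpha|+|\bbeta|$: assuming the bound holds at level $n-1$, substitution into the recursion gives a common factor $(n-1)!\,\widetilde{\bf b}^{\balpha}\widehat{\bf b}^{\bbeta}\|\veps(\vu)\|$ multiplied by $\sum_j \alpha_j+\sum_k\beta_k=n$, which upgrades $(n-1)!$ to $n!$. The estimate \eqref{mixed estimate 2} then follows by combining \eqref{mixed estimate} with Korn's inequality $\|\vw\|_\vV\le C\|\veps(\vw)\|$ and the a priori bound $\|\veps(\vu)\|\le \|\vu\|_\vV\le C\mu_{\min}^{-1}\|\vf\|_{\vV^*}$ from Theorem~\ref{lem: vu bound}.

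The main obstacle is really a bookkeeping one: writing out Leibniz cleanly for the product $\mu\,\veps(\vu):\veps(\vv)$ without over-counting, and verifying that the only surviving multi-indices in the sum are those of the form $\balpha-\ve_j$ (with combinatorial coefficient exactly $\alpha_j$) because each second derivative of $\mu$ or $\lambda$ in the parameters vanishes. Once this affine-in-parameters structure is exploited, the combinatorial identity $\sum_j\alpha_j+\sum_k\beta_k=n$ delivers the factorial in \eqref{mixed estimate} without any loss in the constants, and the tensor-product form $\widetilde{\bf b}^\balpha\widehat{\bf b}^\bbeta$ emerges automatically from the recursion.
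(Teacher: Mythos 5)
Your proposal is correct and follows essentially the same route as the paper: differentiate the weak formulation using the affine parameter dependence to obtain the recurrence with coefficients $\alpha_j$ and $\beta_k$, test with $\partial^{\balpha,\bbeta}_{\bsy,\bsz}\vu$, combine coercivity, Cauchy--Schwarz, and $\|\nabla\cdot\vw\|\le\sqrt{d}\,\|\veps(\vw)\|$ to get the recursion with $\widetilde b_j$ and $\widehat b_k$, and close by induction on $n=|\balpha|+|\bbeta|$ via $\sum_j\alpha_j+\sum_k\beta_k=n$. The only cosmetic difference is that the paper first treats the pure cases $\bbeta={\bf 0}$ and $\balpha={\bf 0}$ before running the same induction for the mixed case, whereas you go directly to the induction; the constants and the final step via Korn's inequality and \eqref{a priori} match.
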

\begin{proof} Differentiating both sides of \eqref{para weak} with respect to the variables $y_j$ and $z_k$, we find the following recurrence after a tedious calculation 
\begin{equation}\label{recurrence}
 \calB(\bsy,\bsz;\partial_{\bsy,\bsz}^{\balpha,\bbeta}\vu, \vv) 
  =-2\sum_{\balpha}\alpha_j \int_\Omega \psi_j \veps(\partial_{\bsy,\bsz}^{\balpha-{\bf e}_j,\bbeta} \vu):\veps(\vv)\,d\bsx
  -\sum_{\bbeta}\beta_k \int_\Omega \phi_k \nabla\cdot(\partial_{\bsy,\bsz}^{\balpha,\bbeta-{\bf e}_k} \vu)\nabla\cdot\vv\,d\bsx,\end{equation}
for all $\vv \in \vV,$ where $\sum_{\balpha}=\sum_{j,\alpha_j\ne 0}$ (that is, the sum over the nonzero indices of $\balpha$), and   ${\bf e}_i \in \calS$ denotes the multi-index with entry $1$ in position $i$ and zeros elsewhere. 

Choosing $\vv=\partial_{\bsy,\bsz}^{\balpha,\bbeta}\vu$ in \eqref{recurrence}, then using the inequality $\|\nabla \cdot \vv\|\le \sqrt{d}\,\|\veps(\vv)\|$ and  the coercivity property of $\calB(\bsy,\bsz;\cdot,\cdot)$ in \eqref{eq: coer B}, after some simplifications, we obtain    
\[ \|\veps(\partial_{\bsy,\bsz}^{\balpha,\bbeta} \vu)\|^2
\le 
\sum_{\balpha}\alpha_j \widetilde b_j \|\veps(\partial_{\bsy,\bsz}^{\balpha-{\bf e}_j,\bbeta} \vu)\|\,
\|\veps(\partial_{\bsy,\bsz}^{\balpha,\bbeta} \vu)\|
+ \sum_{\bbeta}\beta_k\, \widehat b_k
\|\veps(\partial_{\bsy,\bsz}^{\balpha,\bbeta-{\bf e}_k} \vu)\|\,\|\veps(\partial_{\bsy,\bsz}^{\balpha,\bbeta} \vu)\|\,,\]
and consequently, 
\begin{equation}\label{estimate 1}
    \|\veps(\partial_{\bsy,\bsz}^{\balpha,\bbeta} \vu)\|
\le 
\sum_{\balpha}\alpha_j \widetilde b_j \|\veps(\partial_{\bsy,\bsz}^{\balpha-{\bf e}_j,\bbeta} \vu)\|\,
+ \sum_{\bbeta}\beta_k\, \widehat b_k
\|\veps(\partial_{\bsy,\bsz}^{\balpha,\bbeta-{\bf e}_k} \vu)\|\,.
\end{equation}
When $\bbeta={\bf 0},$ the above inequality reduces to 
\begin{equation}\label{estimate 2}
     \|\veps(\partial_{\bsy}^{\balpha} \vu)\| \le 
\sum_{\balpha}\alpha_j \widetilde b_j \|\veps(\partial_{\bsy}^{\balpha-{\bf e}_j} \vu)\|\,.
\end{equation}
Recursively, we obtain 
\[    \|\veps(\partial_{\bsy}^{\balpha} \vu)\| \le 
 [\,|\balpha| \,(|\balpha|-1)\,\cdots 1]\,\prod_{i\ge 1} (\widetilde b_i)^{\alpha_i}\|\veps(\vu)\|= |\balpha|!
\prod_{i\ge 1} (\widetilde b_i)^{\alpha_i}\|\veps(\vu)\|,
\]
and hence,  \eqref{mixed estimate} holds true in this case. Similarly, we can show \eqref{mixed estimate} when $\balpha={\bf 0}.$ 

When $\balpha$ and $\bbeta$ are both not identically zero vectors, the above approach can be extended, however it is not easy to follow. Owing to this, following \cite{CohenDeVoreSchwab2010}, we use instead the induction hypothesis on $n:=|\balpha+\bbeta|$. From the above contribution, it is clear that \eqref{mixed estimate} holds true when $|\balpha+\bbeta|=1$. Now, assume that \eqref{mixed estimate} is true for $|\balpha+\bbeta|=n$, and the task is to claim  it for $|\balpha+\bbeta|=n+1.$  From \eqref{estimate 1} and the induction hypothesis, we have 
\begin{multline*}
    \|\veps(\partial_{\bsy,\bsz}^{\balpha,\bbeta} \vu)\|\le 
n! \Big(\sum_{\balpha}\alpha_j \widetilde b_j  
\widetilde {\bf b}^{\balpha-{\bf e}_j} \widehat {\bf b}^{\bbeta}+ \sum_{\bbeta}\beta_k\, \widehat b_k
\widetilde {\bf b}^{\balpha} \widehat {\bf b}^{\bbeta-{\bf e}_k}\Big)\|\veps(\vu)\|\\
= n!\widetilde {\bf b}^{\balpha} \widehat {\bf b}^{\bbeta}\Big(\sum_{\balpha}\alpha_j  + \sum_{\bbeta}\beta_k\Big)\|\veps(\vu)\|\,.
\end{multline*}
Since $\sum_{\balpha}\alpha_j  + \sum_{\bbeta}\beta_k=n+1$, the proof of \eqref{mixed estimate} is completed.  

Finally, since $\|\veps(\partial_{\bsy,\bsz}^{\balpha,\bbeta} \vu(\cdot,\bsy,\bsz))\| \ge C\,\|\partial_{\bsy,\bsz}^{\balpha,\bbeta} \vu(\cdot,\bsy,\bsz)\|_{\vV}$ (by Korn's inequality) and since $\|\veps(\vu)\|\le \|\nabla \vu\|\le C\mu_{\min}^{-1}\|\vf\|_{\vV^*}$ (by \eqref{a priori}), we derive \eqref{mixed estimate 2} from \eqref{mixed estimate}. 
 \end{proof}
\section{A truncated problem}\label{Sec: Truncation}
This section is dedicated to investigating the error from truncating the first and second sums in \eqref{KLexpansion} at $s_1$ and $s_2$ terms, respectively, from some $s_1,s_2 \in \N.$ In other words, we set  $y_j=0$ and $z_k=0$ for $j>s_1$ and $k>s_2$, respectively. We start by defining the truncated weak formulation problem: for every $\bsy^{s_1},\,\bsz^{s_2} \in U,$ find $\vu_{\bf s}(\cdot, \bsy^{s_1},\bsz^{s_2})  \in \vV$, with ${\bf s}=(s_1,s_2)$,   such that
\begin{equation}\label{truncated weak}
  \calB  (\bsy^{s_1},\bsz^{s_2};\vu_{\bf s}(\cdot,\bsy^{s_1},\bsz^{s_2}),\vv) = \ell(\vv) \qquad \forall ~\vv \in \vV.
\end{equation}

Thanks to Theorem \ref{thm: unique solution}, the truncated problem \eqref{truncated weak} has a unique solution. Estimating the truncation  error, which is needed for measuring the  QMC finite element error in \eqref{combine},  is the topic of the next theorem.   For brevity, we let 
\[\mu_c(\bsx,\bsy) = \sum_{j=s_1+1}^\infty y_j \psi_j(\bsx)~~{\rm and}~~ 
 \lambda_c(\bsx,\bsz) = \sum_{j=s_2+1}^\infty z_j \phi_j(\bsx), \quad\bsx\in\Omega,\ \bsy,\bsz\in U.\]
\begin{theorem}\label{Truncating error}
Under Assumption \eqref{ass A2}, for every $\vf \in \vV^*$, $\bsy,\bsz \in U$, and
${\bf s}=(s_1,s_2) \in \N^2$, the solution $\vu_{\bf s}$ of the truncated parametric weak problem
\eqref{truncated weak}
satisfies
\[
\|\vu(\cdot,\bsy,\bsz) - \vu_{\bf s}(\cdot,\bsy^{s_1},\bsz^{s_2})\|_{\vV} 
 \le C\,\widehat C  \|\vf\|_{\vV^*},\]
 where 
\[\widehat C= \sum_{j \ge s_1+1} \|\psi_j\|_{L^\infty(\Omega)}
 +\sum_{j \ge s_2+1} \|\phi_j\|_{L^\infty(\Omega)}.\]
Moreover, if \eqref{ass A5}--\eqref{ass A3} are satisfied, and 
if $\calL:\vV \to \R$ is a bounded  linear functional, (that is,  $|\calL(\vw)|\le \|\calL\|_{\vV^*}\|\vw\|_\vV$ for all $\vw \in \vV$), then for every $\bsy,\,\bsz  \in U,$ we have 
\begin{equation}\label{convergence calL u-us}
|\calL(\vu(\cdot, \bsy,\bsz))-\calL(\vu_{\bf s}(\cdot, \bsy^{s_1},\bsz^{s_2}))| 
\le C\, \Big(s_1^{1-\max(1/p,1/\varrho_1)}+ s_2^{1-\max(1/q,1/\varrho_2)}\Big)\,   \|\vf\|_{\vV^*} \|\calL\|_{\vV^*},    
\end{equation}
for some $0<p\,,q\le 1$ (see  \eqref{ass A1}) and for some $0<\varrho_1\,,\varrho_2<1$ (see \eqref{ass A3})\,. Here, the   (generic) constant $C$  depends on  $\Omega$, $\mu_{\max}$, $\mu_{\min}$, and  $\lambda_{\max}$.
  \end{theorem}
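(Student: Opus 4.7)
My plan is to first establish the $\vV$-norm bound by a standard perturbation argument and then to deduce the $\calL$-error rate from the series-tail assumptions \eqref{ass A5}, \eqref{ass A1}, and \eqref{ass A3}.

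For the first estimate, I would subtract the weak formulations \eqref{para weak} and \eqref{truncated weak}. Since both problems share the right-hand side $\ell(\vv)$, writing the splittings $\mu(\cdot,\bsy) = \mu^{s_1}(\cdot,\bsy) + \mu_c(\cdot,\bsy)$ and $\lambda(\cdot,\bsz) = \lambda^{s_2}(\cdot,\bsz) + \lambda_c(\cdot,\bsz)$ (with $\mu^{s_1}, \lambda^{s_2}$ denoting the truncations to the first $s_1$ and $s_2$ terms, and $\mu_c, \lambda_c$ as defined above) yields the identity
\[
\calB(\bsy,\bsz;\vu-\vu_{\bf s},\vv) \;=\; -\int_\Omega\bigl[2\mu_c\,\veps(\vu_{\bf s}):\veps(\vv) + \lambda_c\,\nabla\cdot\vu_{\bf s}\,\nabla\cdot\vv\bigr]\,d\bsx
\]
for all $\vv\in\vV$. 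Testing with $\vv = \vu - \vu_{\bf s}$, invoking coercivity \eqref{eq: coer B}, Cauchy--Schwarz, the inequality $\|\nabla\cdot\vw\|\le\sqrt{d}\,\|\nabla\vw\|$, and Korn's inequality leads to
\[
\|\vu - \vu_{\bf s}\|_\vV \;\le\; C\mu_{\min}^{-1}\bigl(\|\mu_c(\cdot,\bsy)\|_{L^\infty(\Omega)} + \|\lambda_c(\cdot,\bsz)\|_{L^\infty(\Omega)}\bigr)\|\vu_{\bf s}\|_\vV.
\]
Since $|y_j|,|z_j|\le \tfrac12$, each $L^\infty$-norm is dominated by a tail of the $\widehat C$-series, and the a priori bound \eqref{a priori} applied to $\vu_{\bf s}$ (which uses only \eqref{ass A2}) gives $\|\vu_{\bf s}\|_\vV\le C\mu_{\min}^{-1}\|\vf\|_{\vV^*}$, completing the first claim.

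For the second estimate, the $\vV$-continuity of $\calL$ directly yields $|\calL(\vu) - \calL(\vu_{\bf s})|\le \|\calL\|_{\vV^*}\|\vu - \vu_{\bf s}\|_\vV$, so the remaining task is to quantify the two tail sums in $\widehat C$. For the $\psi$-sum I would split into cases. When $0<p<1$, the ordering \eqref{ass A5} combined with $\sum_j\|\psi_j\|_{L^\infty(\Omega)}^p<\infty$ from \eqref{ass A1} yields the pointwise decay $\|\psi_j\|_{L^\infty(\Omega)} \le C\,j^{-1/p}$ via the standard trick $j\,\|\psi_j\|_{L^\infty(\Omega)}^p\le \sum_{k=1}^j\|\psi_k\|_{L^\infty(\Omega)}^p$, so that the tail is $\lesssim s_1^{1-1/p}$ by comparison with the integral. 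When $p=1$, the summability estimate fails to give a rate by itself, and assumption \eqref{ass A3} is invoked to supply the explicit rate $s_1^{1-1/\varrho_1}$. Retaining the better (smaller) exponent in each case produces the $\max(1/p,1/\varrho_1)$ in the statement, and the analogous argument applied to $\phi_j$ handles $s_2$.

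The main obstacle I anticipate is the careful bookkeeping at the boundary case $p=1$ (and $q=1$), where assumption \eqref{ass A1} alone gives only qualitative convergence of the tail and one must invoke \eqref{ass A3} to obtain a quantitative algebraic rate; the $\max$ then has to be read as the better of the two available bounds. Apart from this, I need to make sure the constant $C$ in \eqref{convergence calL u-us} remains independent of $\bsy, \bsz$, which will follow since every $\bsy, \bsz$-dependence above enters only through the uniform bounds $|y_j|, |z_j|\le \tfrac12$ and the structural parameters $\mu_{\min},\mu_{\max},\lambda_{\max}$ of \eqref{ass A2}.
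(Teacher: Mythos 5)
Your proposal is correct and follows essentially the same route as the paper: a perturbation identity between the full and truncated weak forms, coercivity plus the a priori bound to get the $\vV$-norm estimate with $\widehat C$, and then Stechkin's inequality (for $p,q<1$) together with \eqref{ass A3} (for the boundary cases) to convert the tails into algebraic rates. The only cosmetic difference is that you place the error in the full bilinear form $\calB(\bsy,\bsz;\cdot,\cdot)$ with $\vu_{\bf s}$ in the perturbation term, whereas the paper uses the truncated form with $\vu$ on the right-hand side; both forms are coercive under \eqref{ass A2}, so the two versions are interchangeable.
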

\begin{proof}
From the variational formulations in \eqref{para weak} and \eqref{truncated weak}, we notice that 
\[\calB  (\bsy^{s_1},\bsz^{s_2};\vu_{\bf s}(\cdot,\bsy^{s_1},\bsz^{s_2})-\vu(\cdot,\bsy,\bsz),\vv)=\calB  (\bsy-\bsy^{s_1},\bsz-\bsz^{s_2};\vu(\cdot,\bsy,\bsz),\vv).\]
Following the steps in \eqref{eq: bounded} and using the achieved estimate in \eqref{a priori}, we have 
\begin{align*}
|\calB  (\bsy-\bsy^{s_1},\bsz-\bsz^{s_2};\vu(\cdot,\bsy,\bsz),\vv)|
&\le C\max_{\bsx\in\Omega,\ \bsy,\bsz\in U}(|\lambda_c(\bsx,\bsy)|+|\mu_c(\bsx,\bsz)|)\,\|\vu\|_{\vV}\,\|\vv\|_{\vV}
\\
&\le  C\,\widehat C  \| \vf \|_{\vV^*}\|\vv\|_{\vV}\,.
\end{align*}
On the other hand, by the coercivity property in  \eqref{eq: coer B}, we have 
\begin{multline*}
    \calB  (\bsy^{s_1},\bsz^{s_2};\vu_{\bf s}(\cdot,\bsy^{s_1},\bsz^{s_2})-\vu(\cdot,\bsy,\bsz),\vu_{\bf s}(\cdot,\bsy^{s_1},\bsz^{s_2})-\vu(\cdot,\bsy,\bsz)) \\
    \ge C \mu_{\min} \| \vu_{\bf s}(\cdot,\bsy^{s_1},\bsz^{s_2})-\vu(\cdot,\bsy,\bsz)\|_{\vV}^2\,.
\end{multline*}
Combining  the above equations, then the first desired result follows after simplifying by similar terms. To show \eqref{convergence calL u-us}, we simply use the imposed assumption on $\calL$ and the first achieved estimate to obtain 
\begin{align*}
    \big|\calL\big(\vu(\cdot,\bsy,\bsz) - \vu_{\bf s}(\cdot,\bsy^{s_1},\bsz^{s_2})\big)\big|
&\le
\|\calL\|_{\vV^*}
\|\vu(\cdot,\bsy,\bsz) - \vu_{\bf s}(\cdot,\bsy^{s_1},\bsz^{s_2})\|_{\vV}
\le C\,\widehat C\,   \|\vf\|_{\vV^*} \|\calL\|_{\vV^*}.
\end{align*}
Hence, \eqref{convergence calL u-us} is a direct consequence of the above estimate, the Stechkin inequality
\[
\sum_{j \ge s+1} b_j\le C_\varsigma\,
s^{1-\frac{1}{\varsigma}}\Big(\sum_{j \ge 1} b_j^\varsigma\Big)^{\frac{1}{\varsigma}},\quad{\rm for}~~0< \varsigma < 1,
\]
where $\{b_j\}_{j\ge1}$ is a nonincreasing sequence of positive numbers, and assumptions \eqref{ass A5}--\eqref{ass A3}.  
\end{proof} 
\section{Finite element approximation}\label{Sec: FEM}
This section is devoted to introducing the  Galerkin FEM of degree at most $r$ ($r\ge 1$) for the approximation of the solution to the model problem \eqref{para weak} over $\Omega$, and consequently, to problem \eqref{eq:L1}. Stability and error estimates are  investigated. The achieved results in this section are needed  for measuring the  QMC finite element error in \eqref{combine}.

We introduce a family of regular triangulation (made of simplexes)  $\mathcal{T}_h$ of the domain $\overline{\Omega}$ and set $h=\max_{K\in \mathcal{T}_h}(h_K)$, where $h_{K}$ denotes the diameter of the element $K$. Let $V_h \subset H^1_0(\Omega)$ denote the usual conforming finite element space of continuous, piecewise polynomial functions of degree at most $r$ on $\mathcal{T}_h$ that vanish on $\partial \Omega$. Let $\vV_h=[V_h]^d$ be the finite element vector space. Then there exists a
constant $C$ (depending on $\Omega$)   such that,
\begin{equation}\label{projection estimate}
    \inf_{\vv_h \in \vV_h} \| \vv -\vv_h\|_{\vV} \le Ch^\theta \|\vv\|_{{\bf H}^{\theta+1}},\quad {\rm for}~~1\le \theta \le r.
\end{equation}

Motivated by the weak formulation in \eqref{para weak},  we define the parametric finite element approximate solution  as:  find $\vu_h(\cdot, \bsy,\bsz) \in \vV_h$ such that 
\begin{equation}\label{FE solution}
 \calB(\bsy,\bsz;\vu_h, \vv_h) = \ell(\vv_h), \quad \text{for all} \quad \vv_h \in \vV_h,\quad{\rm for~every}~~\bsy,\,\bsz \in U.
\end{equation}

Assuming that \eqref{ass A2} is satisfied, then, for every $\vf \in \vV^*$ and every $\bsy,\,\bsz  \in U$, the finite element scheme defined in \eqref{FE solution} has a unique parametric solution $\vu_h(\cdot, \bsy,\bsz) \in \vV_h$.  This can be shown by mimicking the proof of Theorem \ref{thm: unique solution} because $\vV_h \subset \vV.$ Furthermore, the finite element solution  is also stable; the bound in \eqref{a priori} remains valid with $\vu_h$ in place of $\vu$, that is,  
\begin{equation}\label{eq: H1 bound of u_h}
\| \vu_h(\cdot, \bsy,\bsz)\|_{\vV} \le C\mu_{\min}^{-1} \|\vf\|_{\vV^*}\,.
\end{equation}  

In the next theorem, we  discuss the $\vV$-norm error estimate from the finite element discretization. Then, and as in Theorem \ref{Truncating error},  for measuring the  QMC finite element error in \eqref{combine}, we derive an estimate that involves a linear functional $\cal$ acting on the difference $\vu-\vu_h.$   
\begin{theorem}\label{Convergence theorem}
For every $\bsy,\,\bsz  \in U,$ let $\vu$ and $\vu_h$ be the solutions of problems \eqref{eq:L1} and \eqref{FE solution}, respectively.  Assuming that $\vu$ satisfies the regularity properties in \eqref{a priori H2} for some integer $1\le \theta\le r.$ Under Assumption \eqref{ass A2} and \eqref{ass A4}, and when $\vf \in {\bf H}^{\theta-1}(\Omega)$, we have
\begin{equation}\label{convergence}
    \|\vu(\cdot, \bsy,\bsz)-\vu_h(\cdot, \bsy,\bsz)\|_{\vV}  
\le  C\, h^\theta \|\vf\|_{{\bf H}^{\theta-1}}.
\end{equation}
Moreover, if $\calL:{\bf L}^2(\Omega) \to \R$ is a bounded  linear functional (that is, $|\calL(\vw)|\le \|\calL\|\,\|\vw\|$), then 
\begin{equation}\label{convergence calL}
|\calL(\vu(\cdot, \bsy,\bsz))-\calL(\vu_h(\cdot, \bsy,\bsz))|\le  C\, h^{\theta+1} \|\vf\|_{{\bf H}^{\theta-1}} \|\calL\|,\quad{\rm for~every}~~\bsy,\,\bsz  \in U.
\end{equation}
 The  constant $C$  depends on  $\Omega$, $\mu_{\max}$, $\mu_{\min}$, and  $\lambda_{\max}$, but not on $h$.
\end{theorem}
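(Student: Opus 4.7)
The plan is to treat the two bounds separately, using the classical energy estimate for \eqref{convergence} and an Aubin--Nitsche duality argument for \eqref{convergence calL}. Both rely on the same building blocks already in the paper: the continuity \eqref{eq: bounded} and coercivity \eqref{eq: coer B} of $\calB(\bsy,\bsz;\cdot,\cdot)$, the regularity result \eqref{a priori H2}, the interpolation error \eqref{projection estimate}, the symmetry of $\calB(\bsy,\bsz;\cdot,\cdot)$ in its last two arguments, and the fact that $\vV_h\subset\vV$.

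For \eqref{convergence}, subtracting \eqref{FE solution} from \eqref{para weak} gives the Galerkin orthogonality $\calB(\bsy,\bsz;\vu-\vu_h,\vv_h)=0$ for every $\vv_h\in \vV_h$. Writing $\vu-\vu_h=(\vu-\vv_h)+(\vv_h-\vu_h)$ with $\vv_h\in\vV_h$ arbitrary, the coercivity bound \eqref{eq: coer B} and the continuity bound \eqref{eq: bounded} yield the standard quasi-best approximation estimate
\[
\|\vu-\vu_h\|_\vV \;\le\; \frac{d\lambda_{\max}+2\mu_{\max}}{C\mu_{\min}}\,\inf_{\vv_h\in\vV_h}\|\vu-\vv_h\|_\vV.
\]
Inserting \eqref{projection estimate} with the regularity index $\theta$ gives $\|\vu-\vu_h\|_\vV\le C h^{\theta}\|\vu\|_{\bf H^{\theta+1}}$, and since the hypotheses of Theorem \ref{lem: vu bound} are exactly those assumed here (Assumption \eqref{ass A4} for the Lam\'e coefficients, the stated smoothness of $\Omega$, and $\vf\in\mathbf{H}^{\theta-1}(\Omega)$), \eqref{a priori H2} replaces $\|\vu\|_{\bf H^{\theta+1}}$ by $\widetilde C\|\vf\|_{{\bf H}^{\theta-1}}$, proving \eqref{convergence}.

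For \eqref{convergence calL}, I would run an Aubin--Nitsche duality argument. Since $\calL$ is bounded on ${\bf L}^2(\Omega)$, Riesz representation gives $\vg\in{\bf L}^2(\Omega)$ with $\|\vg\|=\|\calL\|$ and $\calL(\vv)=\iprod{\vg,\vv}$. Introduce the symmetric adjoint problem: find $\vw(\cdot,\bsy,\bsz)\in\vV$ with
\[
\calB(\bsy,\bsz;\vv,\vw)=\iprod{\vg,\vv}\qquad\text{for all }\vv\in\vV.
\]
Because $\calB$ is symmetric in its last two arguments, this is just the primal elasticity problem with body force $\vg\in{\bf L}^2(\Omega)$, so Theorem \ref{lem: vu bound} (applied with $\theta=1$) yields $\vw\in{\bf H}^2(\Omega)$ with $\|\vw\|_{\bf H^2}\le \widetilde C\|\vg\|=\widetilde C\|\calL\|$. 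Testing the dual equation with $\vv=\vu-\vu_h$ and then subtracting any $\vw_h\in\vV_h$ via Galerkin orthogonality gives
\[
\calL(\vu-\vu_h)=\calB(\bsy,\bsz;\vu-\vu_h,\vw)=\calB(\bsy,\bsz;\vu-\vu_h,\vw-\vw_h).
\]
Applying continuity \eqref{eq: bounded}, choosing $\vw_h$ as a best ${\bf H}^1$ approximation of $\vw$ so that $\|\vw-\vw_h\|_\vV\le Ch\|\vw\|_{\bf H^2}\le Ch\|\calL\|$ by \eqref{projection estimate}, and inserting the already-proven estimate \eqref{convergence} for $\|\vu-\vu_h\|_\vV$ yields $|\calL(\vu-\vu_h)|\le C h^{\theta+1}\|\vf\|_{{\bf H}^{\theta-1}}\|\calL\|$.

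The only subtle point is verifying that the hypotheses of Theorem \ref{lem: vu bound} are in force for the dual problem at regularity level $\theta=1$, independently of the value of $\theta\in\{1,\dots,r\}$ used in the primal estimate. This is not really an obstacle: Assumption \eqref{ass A4} (with the given $\theta\ge 1$) implies it for $\theta=1$, and the stated regularity of $\Omega$ (either $\mathcal C^{\theta,1}$, boundary of class $\mathcal C^{\theta+1}$, or convex when $\theta=1$) certainly covers the $\theta=1$ case. Everything else is the standard assembling of Cé{}a and Aubin--Nitsche ingredients in the parametric setting, with constants uniform in $(\bsy,\bsz)\in U\times U$ because the continuity/coercivity constants depend only on $\mu_{\min},\mu_{\max},\lambda_{\max}$ via \eqref{ass A2} and the regularity constant $\widetilde C$ is uniform in $(\bsy,\bsz)$ under \eqref{ass A4}.
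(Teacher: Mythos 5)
Your proposal is correct and follows essentially the same route as the paper: Céa's lemma (Galerkin orthogonality plus coercivity and continuity of $\calB$, then \eqref{projection estimate} and \eqref{a priori H2}) for \eqref{convergence}, and the Nitsche/Aubin--Nitsche duality argument with the symmetric adjoint problem for \eqref{convergence calL}. The only cosmetic difference is that you invoke the Riesz representation of $\calL$ explicitly before applying the ${\bf H}^2$ regularity of Theorem \ref{lem: vu bound} to the dual solution, which the paper leaves implicit.
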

\begin{proof} The proof of \eqref{convergence} follows a standard argument for finite element approximations and is included here for completeness.
From the weak formulation in \eqref{para weak} and the finite element scheme  in \eqref{FE solution}, we have the following orthogonality property
\begin{equation}\label{equ:Gal ort}
\calB(\bsy,\bsz;\vu-\vu_h, \vv_h) = 0, \quad \text{for all} \quad \vv_h \in \vV_h\,.
\end{equation}
By using the above equation, the coercivity and boundedness of $\calB(\bsy,\bsz;\cdot, \cdot)$, we obtain, 
\begin{align*}
    \| \vu-\vu_h \|_{\vV}^2 
&\le 
C \calB(\bsy,\bsz;\vu-\vu_h, \vu-\vu_h)
=
C \calB(\bsy,\bsz;\vu-\vu_h, \vu-\vw_h)
\le
C \|\vu-\vu_h\|_{\vV}\,\|\vu-\vw_h\|_{\vV},
\end{align*}
for all $\vw_h\in\vV_h$. This implies $\| \vu-\vu_h \|_{\vV}
\le C \|\vu-\vw_h\|_{\vV}$ for all $\vw_h\in\vV_h$. Thus, \eqref{convergence} follows from \eqref{projection estimate} and the regularity estimate in \eqref{a priori H2}.

To show \eqref{convergence calL}, we use the so-called Nitsche trick. We first replace $\ell$ in \eqref{para weak} by $\calL$, and consider a new parametric variational problem: find $\vu_{\calL} \in \vV$ such that  
\begin{equation}\label{para weak new}
 \calB(\bsy,\bsz;\vu_{\calL}, \vv) = \calL(\vv), \quad \text{for all} \quad \vv \in \vV.
\end{equation}
By Theorem \ref{thm: unique solution}, this problem 
has a unique solution for every $\bsy,\bsz \in U$. Hence, using Theorem~\ref{lem: vu bound} (with $\vu_{\calL}$ in place of $\vu$) and the given assumption on  $\calL$, we conclude that  $\|\vu_{\calL}\|_{{\bf H}^2}\le C\|\calL\|.$ Therefore, by repeating the above argument, we deduce
\begin{equation}\label{equ:uL}
\|\vu_{\calL}(\cdot,\bsy,\bsz)-\vu_{\calL,h}(\cdot,\bsy,\bsz)\|_{\vV}\le Ch\|\vu_{\calL}\|_{{\bf H}^2}\le Ch\|\calL\|,
\end{equation}
where $\vu_{\calL,h} \in \vV_h$ is the finite element approximation of $\vu_{\calL}$. By using successively the linearity of $\calL$, equation~\eqref{para weak new}, the symmetry of $\calB(\bsy,\bsz;\cdot,\cdot)$, the Galerkin orthogonality \eqref{equ:Gal ort}, and the boundedness of $\calB(\bsy,\bsz;\cdot,\cdot)$, we obtain
\begin{align*}
|\calL(\vu(\cdot,\bsy,&\bsz))-\calL(\vu_h(\cdot,\bsy,\bsz))|
=
|\calL(\vu(\cdot,\bsy,\bsz)-\vu_h(\cdot,\bsy,\bsz))|
\\
&=
|\calB(\bsy,\bsz;\vu-\vu_h,\vu_{\calL})|
=
|\calB(\bsy,\bsz;\vu-\vu_h,\vu_{\calL}-\vu_{\calL,h})|
\le
C \|\vu-\vu_h\|_{\vV}\,\|\vu_{\calL}-\vu_{\calL,h}\|_{\vV}.
\end{align*}
The required estimate \eqref{convergence calL} now follows from \eqref{convergence} and \eqref{equ:uL}\,.
\end{proof}

\section{QMC method and sparse grids}\label{sec: QMC errors}
Our aim is to measure the QMC finite element error which occurs in the  third term on the right hand side of \eqref{combine}. To serve this purpose, the current  section is dedicated to investigate the high-order QMC   and the high-order QMC sparse grid errors from estimating the finite dimensional   integral
\begin{equation}\label{If}
{\mathcal I}_{\bf s} {F}:=\int_{U_2} \int_{U_1} {F}(\bsy,\bsz)\,d\bsy\,d\bsz\,.\end{equation}
 Recall that   $U_i=[0,1]^{s_i}$ are of fixed dimensions $s_i$ for $i=1,2,$ and ${\bf s}=(s_1,s_2)$. We approximate ${\mathcal I}_{\bf s} {F}$ via an 
equal-weight rule of the form:   
\begin{equation}\label{eq:QMCInt}
Q_{\bf s,\bf N} [{F}]:=\frac{1}{N_1\,N_2}\sum_{k=0}^{N_2-1}\sum_{j=0}^{N_1-1} {F}(\bsy_j,\bsz_k),\quad{\rm with}~~{\bf N}=(N_1,N_2),
\end{equation}
where  $N_i=b^{m_i}$ for a given prime $b$ and a given positive integer $m_i$, with $i=1,2.$ The QMC points  $\{\bsy_0, \ldots,\bsy_{N_1-1} \}$ belong to $ U_1$ and $\{\bsz_0, \ldots,\bsz_{N_2-1} \}$ belong to $U_2$. We shall analyze, in particular, $Q_{\bf s,\bf N}$ being
deterministic, interlaced high-order polynomial lattice rules as introduced in \cite{Dick2008} and as considered for affine-parametric operator equations in \cite{DickKuoGiaNuynsSchwab2014}. To this end, to generate a polynomial lattice rule in base $b$ with $N_1$ points in $U_1$, we need a \emph{generating vector} of polynomials $\bg(x) = (g_1(x), \ldots, g_{s_1}(x)) \in [P_{m_1}({\mathbb Z}_{b})]^{s_1}$, where  $P_{m_1}({\mathbb Z}_{b})$ is the space of polynomials of degree less than $m_1$ in $x$ with coefficients taken from a finite field ${\mathbb Z}_{b}$.

For each integer $0\le n\le b^{m_1}-1$, we associate $n$ with the polynomial
\[ n(x) = \sum_{i=1}^{m_1} \eta_{i-1} x^{i-1}  \quad \in {\mathbb Z}_{b}[x],\]
where $(\eta_{m_1-1}, \ldots ,\eta_0)$ is the $b$-adic expansion of $n$, that is $n =\sum_{i=1}^{m_1} \eta_{i-1}\,b^{i-1}\,.$ We also need a map $v_{m_1}$ which maps elements in ${\mathbb Z}_{b}(x^{-1})$ to
the interval $[0,1)$, defined for any integer $w$ by
\[v_{m_1} \left( \sum_{\ell=w}^\infty t_{\ell} x^{-\ell} \right) =\sum_{\ell=\max(1,w)}^{m_1} t_{\ell} b^{-\ell}.\]

Let $P \in {\mathbb Z}_b[x]$ be an irreducible polynomial with degree $m_1$. The classical polynomial lattice rule $\calS_{P,b,m_1,s_1}(\bg)$ associated with $P$ and the generating vector $\bg$ is comprised of the quadrature points
\[\bsy_n =\left(v_{m_1} \Big( \frac{n(x)g_j(x)}{P(x)} \Big)\right)_{1\le j\le s_1} \in [0,1)^{s_1},\quad {\rm for}~~n = 0,\ldots, N_1 - 1.\]
In a similar fashion, we define the quadrature points $\bsz_n \in [0,1)^{s_2}$ for $n=0,\ldots,N_2-1$. In this case, the \emph{generating vector} of polynomials is $\bg = (g_1, \ldots, g_{s_2})$.

Classical polynomial lattice rules give almost first order of convergence for integrands of bounded variation. To obtain high-order of convergence, an interlacing procedure described as follows is needed. Following \cite{Goda2015,GodaDick2015}, the \emph{digit interlacing function}  with digit interlacing factor $\alpha \in \mathbb{N}$, $\mathscr{D}_\alpha: [0,1)^{\alpha}  \to  [0,1)$,   is defined  by
\[\mathscr{D}_\alpha(x_1,\ldots, x_{\alpha})= \sum_{i=1}^\infty \sum_{j=1}^\alpha
\xi_{j,i} b^{-j - (i-1) \alpha}\;,\]
where $x_j = \sum_{i\ge 1} \xi_{j,i}\, b^{-i}$ for $1 \le j \le \alpha$.
For vectors, we set $\mathscr{D}^s_\alpha: [0,1)^{\alpha s}  \to  [0,1)^s$ with 
\[\mathscr{D}^s_\alpha(x_1,\ldots, x_{\alpha s}) =
(\mathscr{D}_\alpha(x_1,\ldots, x_\alpha),  \ldots,
\mathscr{D}_\alpha(x_{(s-1)\alpha +1},\ldots, x_{s \alpha}))\;.\]
Then, an interlaced polynomial lattice rule of
order $\alpha$ with $b^m$ points in $s$ dimensions
is a QMC rule using $\mathscr{D}_\alpha(\calS_{P,b,m,\alpha s}(\bg))$
as quadrature points, for some given modulus $P$ and generating vector $\bg$.

Next, we derive the error from approximating the  integral ${\mathcal I}_{\bf s} {F}$ in \eqref{If} by the QMC quadrature formula $Q_{\bf s,\bf N} [{F}]$ in \eqref{eq:QMCInt}. The proof  relies on \cite[Theorem 3.1]{DickKuoGiaNuynsSchwab2014}.
\begin{theorem}\label{prop:qmc}
 Let $\bchi=(\chi_j)_{j\ge 1}$ and $\bvphi = (\varphi_j)_{j\ge 1}$ be two sequences of positive numbers with $\sum_{j=1}^\infty \chi_j^p$ and $\sum_{j=1}^\infty \varphi_j^q$ being finite for some $0<p,\,q<1$. Let $ \bchi_{s_1}=(\chi_j)_{1\le j\le s_1}$ and $\bvphi_{s_1} = (\varphi_j)_{1\le j\le s_2},$ and let $\alpha \,:=\, \lfloor 1/p \rfloor +1$ and $\beta \,:=\, \lfloor 1/q \rfloor +1$. Assume that $F$ satisfies the following regularity properties: for any $\bsy\in U_1$, $\bsz\in U_2$, 
$\balpha \in \{0, 1, \ldots, \alpha\}^{s_1}$, and $\bbeta \in \{0, 1, \ldots, \beta\}^{s_2}$, the following inequalities hold 
\begin{equation} \label{eq:like-norm}
|\partial^{\balpha}_\bsy {F}(\bsy,\bsz)| \le c|\balpha|! 
\bchi_{s_1}^{\balpha}\quad{\rm and}\quad |\partial^{\bbeta}_\bsz {F}(\bsy,\bsz)| \le c|\bbeta|! 
\bvphi_{s_2}^{\bbeta}, 
\end{equation}
where the constant $c$ is independent of $\bsy$, $\bsz$, $s_1$, $s_2$, and of $p$ and $q.$ Then one can construct two interlaced polynomial lattice rules of order $\alpha$ with $N_1$ points, and of order $\beta$ with $N_2$ points, using a fast component-by-component (CBC) algorithm,  with cost $\calO(\alpha\,s_1 N_1(\log N_1+\alpha\,s_1))$ and $\calO(\beta\,s_2 N_2(\log N_2+\beta\,s_2))$  operations, respectively,  so that the following error bound holds
\[|{\mathcal I}_{\bf s} {F}-Q_{\bf s,\bf N} [{F}]|\le C\,\Big(N_1^{-1/p}+N_2^{-1/q}\Big)\,.\]
The  constant $C$ depends on $b,p$ and $q$, but is independent of $s_i$ and $m_i$ for $i\in \{1,2\}.$ 

By choosing $m_1, m_2 \in \mathbb{N}$ such that $|m_1q - m_2 p| < 1$, we obtain that the total number of QMC points is $N = N_1 N_2 = b^{m_1+m_2}$ and 
\[|{\mathcal I}_{\bf s} {F}-Q_{\bf s,\bf N} [{F}]|\le C\, N^{-\frac{1}{p+q}}\,.\] 
\end{theorem}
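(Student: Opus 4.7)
I would exploit the tensor-product structure of both the integral and the quadrature rule and reduce the two-parameter error to two applications of the single-family high-order QMC bound for interlaced polynomial lattice rules from \cite{DickKuoGiaNuynsSchwab2014}. Introducing the partial operators
\begin{equation*}
I_1 F(\bsz) := \int_{U_1} F(\bsy,\bsz)\,d\bsy, \qquad Q_1[F](\bsz) := \frac{1}{N_1}\sum_{j=0}^{N_1-1} F(\bsy_j,\bsz),
\end{equation*}
and analogously $I_2, Q_2$ acting on the $\bsz$ variable, we have ${\mathcal I}_{\bf s} = I_2 \circ I_1$ and $Q_{{\bf s},{\bf N}} = Q_2 \circ Q_1$, which gives the telescoping identity
\begin{equation*}
{\mathcal I}_{\bf s} F - Q_{{\bf s},{\bf N}}[F] = (I_2 - Q_2)[I_1 F] + Q_2\bigl[(I_1 - Q_1)F\bigr].
\end{equation*}

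To control the first summand, I would differentiate under the integral sign (justified by the uniform pointwise bound $|\partial^{\bbeta}_\bsz F| \le c|\bbeta|!\,\bvphi_{s_2}^{\bbeta}$) to conclude that $I_1 F$ inherits the same bound on $\partial^{\bbeta}_\bsz$. Invoking \cite[Theorem 3.1]{DickKuoGiaNuynsSchwab2014} with weights $(\varphi_j)$ and order $\beta = \lfloor 1/q\rfloor + 1$ then yields an interlaced polynomial lattice rule on $N_2$ points, constructible by the fast CBC algorithm at cost $\calO(\beta s_2 N_2(\log N_2 + \beta s_2))$, with $|(I_2 - Q_2)[I_1 F]| \le C N_2^{-1/q}$. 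For the second summand, the hypothesis $|\partial^{\balpha}_\bsy F(\bsy,\bsz)| \le c|\balpha|!\,\bchi_{s_1}^{\balpha}$ holds pointwise in $\bsz$ with constant independent of $\bsz$, so the same theorem applied to the slice $F(\cdot,\bsz_k)$ gives $|(I_1 - Q_1)F(\cdot,\bsz_k)| \le C N_1^{-1/p}$ uniformly in $k$; averaging over $k$ preserves the bound. Adding the two contributions produces the first claimed inequality.

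For the second assertion, the plan is to balance the two rates. Using the identities
\begin{equation*}
\frac{m_1}{p} - \frac{m_1 + m_2}{p+q} = \frac{m_1 q - m_2 p}{p(p+q)}, \qquad \frac{m_2}{q} - \frac{m_1+m_2}{p+q} = \frac{m_2 p - m_1 q}{q(p+q)},
\end{equation*}
the assumption $|m_1 q - m_2 p| < 1$ bounds each side in absolute value by a constant depending only on $p,q$, so both $N_1^{-1/p} = b^{-m_1/p}$ and $N_2^{-1/q} = b^{-m_2/q}$ are comparable to $b^{-(m_1+m_2)/(p+q)} = N^{-1/(p+q)}$ up to a multiplicative constant absorbed into $C$. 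The main technical point that will require care is verifying that the constant $C$ inherited from \cite[Theorem 3.1]{DickKuoGiaNuynsSchwab2014} is genuinely independent of $s_1$ and $s_2$; this relies on the product-weight bookkeeping in that theorem being uniformly controlled by the $\ell^p$ and $\ell^q$ summability of $(\chi_j)$ and $(\varphi_j)$, which is standard for interlaced polynomial lattice constructions but worth checking explicitly.
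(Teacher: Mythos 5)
Your proposal is correct and follows essentially the same route as the paper: a telescoping decomposition of the tensor-product error into two single-variable QMC errors, each controlled by \cite[Theorem 3.1]{DickKuoGiaNuynsSchwab2014} via the uniform derivative bounds in \eqref{eq:like-norm}, followed by the same algebraic balancing of $N_1^{-1/p}$ and $N_2^{-1/q}$ against $N^{-1/(p+q)}$ under $|m_1q-m_2p|<1$. The only cosmetic difference is that you insert the mixed term $Q_2[I_1F]$ whereas the paper inserts $I_2[Q_1 F]$ (so you need differentiation under the integral sign for $I_1F$ while the paper integrates a pointwise-in-$\bsz$ bound), which is an entirely equivalent variant.
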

\begin{proof}
By adding and subtracting $Q_{s_1,N_1}[{F}(\cdot,\bsz)]:=\frac{1}{N_1}\sum_{j=0}^{N_1-1} {F}(\bsy_j,\bsz),$ the QMC error can be decomposed as
\begin{multline*}
|{\mathcal I}_{\bf s} {F}-Q_{\bf s,\bf N} [{F}]|\le \int_{U_2} \Big|\int_{U_1} {F}(\bsy,\bsz)\,d\bsy -Q_{s_1,N_1}[{F}(\cdot,\bsz)]\Big|\,d\bsz\\
+\frac{1}{N_1}\sum_{j=0}^{N_1-1}\Big|\int_{U_2} {F}(\bsy_j,\bsz)\,d\bsz-Q_{s_2,N_2}[{F}(\bsy_j,\cdot)]
\Big|\,,
\end{multline*}
where $Q_{s_2,N_2}[{F}(\bsy_j,\cdot)]:=\frac{1}{N_2}\sum_{k=0}^{N_2-1}
 {F}(\bsy_j,\bsz_k)$.  By using \cite[Theorem 3.1]{DickKuoGiaNuynsSchwab2014} and the regularity assumptions in \eqref{eq:like-norm}, we have 
\[\Big|\int_{U_1} {F}(\bsy,\bsz)\,d\bsy -Q_{s_1,N_1}[{F}(\cdot,\bsz)]\Big| \le C\, N_1^{-1/p}~{\rm and}~\Big|\int_{U_2} {F}(\bsy_j,\bsz)\,d\bsz-Q_{s_2,N_2}[{F}(\bsy_j,\cdot)]\Big|\le C N_2^{-1/q}\,.\]
Combining the above equations,  we immediately deduce the first desired results.

Now, using  $N= b^{m_1+m_2}$ and the conditions  $|m_1 q - m_2 p| < 1,$ we have 
\begin{multline*}
N_1^{-1/p} = N^{-1/(p+q)} b^{(m_1+m_2)/(p+q)-m_1/p}\\
= N^{-1/(p+q)} b^{(m_2p-m_1q)/(p(p+q))}\le N^{-1/(p+q)} b^{1/(p(p+q))}\le C N^{-1/(p+q)}\,.    \end{multline*}
Similarly, 
\begin{equation*}
N_2^{-1/q}\le N^{-1/(p+q)} b^{1/(q(p+q))}\le CN^{-1/(p+q)}\,,
\end{equation*}
and therefore,  the proof of the second desired estimate is completed. 
\end{proof}

In order to reduce the computational cost (and thereby improving the convergence rate), we next discuss a combination of the QMC rules with a sparse grid approach. The sparse grid approach gives us more flexibility as we can combine different polynomial lattice rules. Since the weights for the expansion of $\lambda$ and $\mu$ are of a similar form as in other problems \cite{DickKuoGiaNuynsSchwab2014}, we can also reuse existing constructions of higher order polynomial lattice rules.

Let $\{N_i^{(j)}\}_{j\ge 1}$  be increasing sequences of positive values for $i=1,2.$ Then
\begin{equation*}
\mathcal{I}_{\bf s} {F} = \lim_{j,k \to \infty} Q_{\mathbf{s}, {\bf N}^{j,k}}[{F}],\quad {\rm with}~~~{\bf N}^{j,k}=(N_1^{(j)}, N_2^{(k)}).
\end{equation*}
We can write this as a telescoping sum
\begin{equation}\label{eq: infinite sums}
\mathcal{I}_{\bf s} {F} = \sum_{j, k = 1}^\infty a_{jk},~{\rm with}~a_{jk}= \overline a_{j,k}-\overline a_{j,k-1}=  \widehat a_{j,k}-\widehat a_{j-1,k},
\end{equation}
where 
\[\overline a_{j,k}=Q_{\mathbf{s},{\bf N}^{j,k}}[F] - Q_{\mathbf{s}, {\bf N}^{j-1,k}}[F],\quad \widehat a_{jk}= Q_{\mathbf{s},{\bf N}^{j,k}}[F] - Q_{\mathbf{s},{\bf N}^{j,k-1}}[F], \]
and 
\begin{equation}\label{eq: cond}
    Q_{\mathbf{s}, {\bf N}^{j,k}} = 0~~{\rm if}~~ (j,k) \in \{(\zeta,0), (0,\omega): \zeta, \omega \in \N\cup \{0\}\}\,.
\end{equation}

To get a computable quantity, we need to truncate the infinite sums in \eqref{eq: infinite sums}. This can be done in different ways; we choose to truncate the tensor grid along the main diagonal of indexed points for each combination of the QMC rules ${\bf N}^{j,k}$ where $L$ is the ``level'' of the sparse grid rule. Explicitly, we truncate as follows: 
\begin{equation}\label{eq: truncate infinite sums}
{\mathcal I}_{{\bf s},L}[{F}] = \sum_{\substack{j, k = 1 \\ j+k \le L }} a_{jk}
  =\sum_{k = 1}^{L-1} \left( Q_{\mathbf{s},{\bf N}^{L-k,k}}[{F}]  - Q_{\mathbf{s},{\bf N}^{L-k,k-1}}[{F}]  \right),\quad   
\end{equation}
where in the second equality we used  $ \sum_{\substack{j, k = 1 \\ j+k \le L }}=\sum_{k = 1}^{L-1}\sum_{j = 1}^{L-k}$ and the condition in \eqref{eq: cond}.  We prove next that the quadrature error incurred on this QMC sparse  grid is relatively small for a sufficiently large $L$. 

In the next theorem, for some $\vartheta > 0$ such that $p \vartheta, q \vartheta \ge 1$, we choose  $N_1^{(j)}=b^{ \lceil j p \vartheta \rceil}$ and $N_2^{(j)}=b^{ \lceil j q \vartheta \rceil}$ for $j\ge 1$. The purpose of $\vartheta > 0$ is to avoid a situation where $N_i^{(j)} = N_i^{(j+1)}$ for some admissible $i, j$. Choosing $\vartheta$ such that $p \vartheta, q \vartheta \ge 1$ guarantees that this cannot happen. On the other hand, since the constant $C$ increases with $\vartheta$, we consider $\vartheta$ as a constant. In other words, in order to reduce the error in Theorem~\ref{qmc sparse grid} one increases $L$ and therefore $M$, but keeps $\vartheta$ fixed. 
\begin{theorem}\label{qmc sparse grid}
 Under the assumptions of Theorem \ref{prop:qmc}, the following error bound holds true   $|{\mathcal I}_{\bf s}[{F}]-{\mathcal I}_{{\bf s},L}[{F}]|
\le  C b^{-L\vartheta/2},$ where the constant $C$ depends on $b,p,q, \vartheta$, but is independent of $s_1$, $s_2$ and $L$.

Furthermore, we let $M$ denote the total number of quadature points used in the QMC sparse grid rule, and  then
\begin{equation*}
|{\mathcal I}_{\bf s}[{F}]-{\mathcal I}_{{\bf s},L}[{F}]|
\le C (\log M)^{1_{p=q}/(2p)} M^{-\min(1/p, 1/q)/2},
\end{equation*}
where $1_{p=q}$ is $1$ if $p=q$ and $0$ otherwise, and where the constant $C$ depends on $b, p, q, \vartheta$, but is independent of $s_1$, $s_2$ and $M$.
\end{theorem}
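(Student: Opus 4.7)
My plan is to bound the tail of a mixed-difference expansion of the error and sum the resulting geometric series carefully, then invert in $M$. Write $Q^{j,k}F = Q_1^j Q_2^k F$ for the tensor-product rule with $N_1^{(j)}$ points in $\bsy$ and $N_2^{(k)}$ in $\bsz$, and set $\Delta_i^l := Q_i^l - Q_i^{l-1}$ (with $Q_i^0 = 0$ by \eqref{eq: cond}). Using \eqref{eq: infinite sums}--\eqref{eq: truncate infinite sums},
\begin{equation*}
\mathcal{I}_{\bf s}[F] - \mathcal{I}_{{\bf s},L}[F] = \sum_{\substack{j,k\ge 1\\ j+k > L}} a_{jk}, \qquad a_{jk} = \Delta_1^j\,\Delta_2^k\,F.
\end{equation*}

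The key per-term estimate I will establish is $|a_{jk}| \le C \min(b^{-j\vartheta}, b^{-k\vartheta})$. For this, fix $k$ and view $g(\bsy) := (\Delta_2^k F)(\bsy)$ as a function of $\bsy$ alone. Because $\Delta_2^k$ is a signed linear combination of point evaluations in $\bsz$ that commutes with $\partial^{\balpha}_{\bsy}$, the assumption \eqref{eq:like-norm} on $F$ transfers verbatim (with constant at most doubled) to $g$. Invoking the univariate higher-order QMC error bound used in the proof of Theorem~\ref{prop:qmc} (that is, \cite[Thm.~3.1]{DickKuoGiaNuynsSchwab2014}) then gives $|(Q_1^j - I_1)g| \le C(N_1^{(j)})^{-1/p}$, whence $|a_{jk}| = |\Delta_1^j g| \le 2C(N_1^{(j-1)})^{-1/p} \le C'b^{-j\vartheta}$, using $N_1^{(j)} = b^{\lceil jp\vartheta\rceil}$. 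The symmetric argument (freezing $j$ and applying the univariate bound in $\bsz$) yields $|a_{jk}| \le C'b^{-k\vartheta}$.

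Summing the tail is the crux. Split by $j\le k$ vs.\ $j>k$; in the first half the summand equals $b^{-k\vartheta}$, and fixing $j$ and summing the geometric series in $k \ge \max(j, L-j+1)$ produces a contribution proportional to $b^{-\max(j,L-j+1)\vartheta}$. This exponent is minimized at $j \approx L/2$ with value $\approx L\vartheta/2$, and the remaining sum in $j$ is itself geometric about that optimum, yielding total bound $Cb^{-L\vartheta/2}$. Crucially, one must retain the $\min$ form rather than relaxing it to $b^{-(j+k)\vartheta/2}$ via AM--GM; the diagonal multiplicity $l-1$ in $\sum_{l>L}(l-1)b^{-l\vartheta/2}$ would then spuriously introduce a factor $L$ that the theorem's $L$-independent constant does not allow.

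For the $M$-dependent estimate, counting quadrature nodes used in \eqref{eq: truncate infinite sums} gives $M \asymp \sum_{k=1}^{L-1} b^{\vartheta[(L-k)p + kq]}$. When $p\ne q$ the geometric sum is dominated by its endpoint, $M \asymp b^{L\vartheta\max(p,q)}$, yielding $b^{-L\vartheta/2} \asymp M^{-\min(1/p,1/q)/2}$; when $p = q$, all $L-1$ terms equal $b^{Lp\vartheta}$, so $M \asymp L\,b^{Lp\vartheta}$ and, using $L \asymp \log M$, $b^{-L\vartheta/2} \asymp (\log M)^{1/(2p)}M^{-1/(2p)}$, matching the $1_{p=q}/(2p)$ log-exponent. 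The main obstacle is the tail summation step; the regularity-transfer argument for $\Delta_2^k F$ and the $M$-inversion are essentially bookkeeping once the $\min$ bound on $|a_{jk}|$ is secured.
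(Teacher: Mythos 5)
Your proof is correct, and it reaches the same two estimates by a slightly different route than the paper. The underlying mechanism is identical in both arguments: apply the higher-order QMC bound of \cite[Theorem 3.1]{DickKuoGiaNuynsSchwab2014} in one block of variables to a function obtained by freezing the other block with a quadrature rule, observing that \eqref{eq:like-norm} survives (with the constant at worst doubled) under signed convex combinations of point evaluations in the frozen block. The difference is in the bookkeeping. You establish the uniform per-term bound $|a_{jk}|\le C\min\bigl(b^{-j\vartheta},b^{-k\vartheta}\bigr)$ and then sum directly over $\{j+k>L\}$ by splitting along $j\le k$ versus $j>k$; your remark that one must keep the $\min$ rather than pass to $b^{-(j+k)\vartheta/2}$ is exactly right, since the latter would cost a factor of $L$ that the theorem's $L$-independent constant forbids. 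The paper instead avoids bounding every $a_{jk}$ individually: it telescopes the inner sums first, reducing the tail to boundary terms $\widehat a_{\omega k}$ and $\overline a_{j,m_0}$ (single differences of quadratured functions), at the price of the three-region decomposition \eqref{3terms}. Your version is the more standard combination-technique analysis and is arguably cleaner; the paper's version needs only single differences and so never has to state a double-difference estimate. Note also that your restriction to the $\min$ bound, rather than the product $b^{-j\vartheta}b^{-k\vartheta}$, is forced by the hypotheses: Theorem~\ref{prop:qmc} assumes only the two separate bounds in \eqref{eq:like-norm}, not the mixed bound \eqref{eq:like-norm mixed}, which is precisely why the sparse grid rate is $b^{-L\vartheta/2}$ and not $b^{-L\vartheta}$. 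The node count and the inversion in $M$ (including the $(\log M)^{1/(2p)}$ factor when $p=q$) agree with the paper's computation.
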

\begin{proof}
Let $\omega>L$ be any positive integer.   Decomposing as  
\begin{equation}\label{3terms}
    \sum_{\substack{j, k = 1 \\ j+k \ge L+1 }}^\omega a_{jk}=\sum_{k = L}^\omega\sum_{j=1}^\omega  a_{jk}+\sum_{k = 1}^{\lceil L/2\rceil-1}\, \sum_{j=L-k+1}^{\omega} a_{jk}+\sum_{k = \lceil L/2\rceil}^{L-1}\,\,\sum_{j=L-k+1}^\omega a_{jk}\,,
\end{equation}
and the task now is to estimate the three terms on the right-hand side of this equality. Using the definition of $a_{jk}$ in \eqref{eq: infinite sums}, we notice that 
\[\sum_{j=1}^\omega a_{jk}   =\widehat a_{\omega k}=Q_{s_2,N_2^{(k)}}[{\bf g}_\omega] - Q_{s_2,N_2^{(k-1)}}[{\bf g}_\omega],~~{\rm with}~~{\bf g}_\omega(\bsz)= Q_{s_1, N_1^{(\omega)}}[{F}(\cdot,\bsz)]\,.\]
However, by adding and subtracting  $\int_{U_2}{\bf g}_{m_0}(\bsz)\,d\bsz$ and using  \cite[Theorem 3.1]{DickKuoGiaNuynsSchwab2014} (where the regularity assumption in \eqref{eq:like-norm} is needed here), we obtain 
\begin{equation}\label{differenceQ}
   |\widehat a_{m_0 k}|\le \sum_{\ell=k-1}^k\Big|Q_{s_2, N_2^{(\ell)}}[\mathbf{g}_{m_0}]-\int_{U_2}{\bf g}_{m_0}(\bsz)\,d\bsz\Big|\le C \Big(N_2^{(k-1)}\Big)^{-1/q}\,,\end{equation}
   for any positive integer $m_0,$ and consequently, 
 \begin{equation}\label{term1}
\Big|\sum_{k= L}^\omega\sum_{j=1}^\omega a_{jk} \Big|    \le 
\sum_{k= L}^\omega|\widehat a_{\omega k}| \le C \sum_{k= L}^\omega\Big(N_2^{(k-1)}\Big)^{-1/q}\le C \sum_{k= L}^\infty\Big(N_2^{(k-1)}\Big)^{-1/q}\,.\end{equation}
By using \eqref{differenceQ} twice (for $m_0=\omega$ and $m_0=L-k$),   we obtain the following estimate of the third term in \eqref{3terms};  
\begin{equation}\label{term3}
\sum_{k = \lceil L/2\rceil}^{L-1}\,\Big| \sum_{j=L-k+1}^{\omega} a_{jk}\Big|
=\sum_{k = \lceil L/2\rceil}^{L-1}\,|\widehat a_{\omega k}-\widehat a_{L-k,k}|\le C\sum_{k = \lceil L/2\rceil}^{L-1} \Big(N_2^{(k-1)}\Big)^{-1/q}\,.
\end{equation}
The remaining task is to estimate the middle term on the right-hand side of \eqref{3terms}. By shifting and changing the order of summations, we get 
\begin{align*}
    \sum_{k = 1}^{\lceil L/2\rceil-1}\,\sum_{j=L-k+1}^{\omega} a_{jk}&=\sum_{k = 1}^{\lceil L/2\rceil-1}\,\sum_{j=L-k+1}^{\omega} [\overline a_{jk}-\overline a_{j,k-1}]
=\sum_{j=L-\lceil L/2\rceil+2}^{\omega} \overline a_{j,\lceil L/2\rceil-1}
-\sum_{k = 1}^{\lceil L/2\rceil-2}\overline a_{L-k,k}\,.
\end{align*}
Now,  proceeding as in \eqref{differenceQ}  but on the region $U_1$, where the regularity assumption in \eqref{eq:like-norm} is needed here, we have 
\[ |\overline a_{j,m_0}|\le \sum_{\ell=j-1}^j\Big|Q_{s_1, N_1^{(\ell)}}[\mathbf{g}_{m_0}]-\int_{U_1}{\bf g}_{m_0}(\bsz)\,d\bsz\Big|\le C \Big(N_1^{(j-1)}\Big)^{-1/p}\,,\]
   for any positive integer $m_0,$ and with ${\bf g}_{m_0}(\bsy)= Q_{s_2, N_2^{(m_0)}}[{F}(\bsy,\cdot)]$. Therefore,  
\begin{multline*}
    \Big|\sum_{k = 1}^{\lceil L/2\rceil-1}\, \sum_{j=L-k+1}^{\omega} a_{jk}\Big| \\
    \le 
C\sum_{j=\lceil L/2\rceil}^\infty \Big(N_1^{(j-1)}\Big)^{-1/p}
+C\sum_{k = 1}^{\lceil L/2\rceil-2}\Big(N_1^{(L-k-1)}\Big)^{-1/p}
\le 
C\sum_{j=\lceil L/2\rceil}^\infty \Big(N_1^{(j-1)}\Big)^{-1/p}\,.
\end{multline*}
Inserting this, \eqref{term1} and  \eqref{term3}  in \eqref{3terms}  leads to 
\[\Big|\sum_{\substack{j, k = 1 \\ j+k \ge L+1 }}^\omega a_{jk}\Big|\le 
C\sum_{j=\lceil L/2\rceil}^\infty \Big(N_1^{(j-1)}\Big)^{-1/p}+C\sum_{j=\lceil L/2\rceil}^\infty \Big(N_2^{(j-1)}\Big)^{-1/q},\]
for any positive integer $\omega > L.$ Thus, using  $N_1^{(j)}=b^{ \lceil j p \vartheta \rceil}$ and $N_2^{(j)}=b^{ \lceil jq \vartheta \rceil}$, we get 
\begin{multline*}
    |{\mathcal I}_{\bf s}[{F}]-{\mathcal I}_{{\bf s},L}[{F}]|
 =\Big|\sum_{\substack{j, k = 1 \\ j+k \ge L+1 }}^\infty a_{jk}\Big|\\
 \le 
C\sum_{j=\lceil L/2\rceil}^\infty \Big[\Big(N_1^{(j-1)}\Big)^{-1/p}+\Big(N_2^{(j-1)}\Big)^{-1/q}\Big]
\le  C\,\sum_{j=\lceil L/2\rceil}^\infty b^{-j \vartheta}
\le  C b^{-L\vartheta/2}\,.
\end{multline*}
Hence the first desired QMC sparse grid estimate is obtained. 

The total number of quadrature points used in the QMC sparse grid approach is
\begin{equation*}
M = \sum_{k=1}^{L-1} b^{\lceil (L-k) p \vartheta \rceil} b^{ \lceil k q \vartheta \rceil} \le b^{2+Lp\vartheta} \sum_{k=1}^{L-1} b^{k(q-p) \vartheta }.
\end{equation*}
For $q = p$ we have $M\le b^{2+Lp\vartheta }(L-1),$ and for $q \ne p$ we have
\begin{align*}
M \le b^{2+Lp \vartheta} \frac{b^{(q-p) \vartheta }-b^{L(q-p)\vartheta}}{1-b^{ (q-p) \vartheta }} \le    
      \frac{ b^{2+L\max(p,q) \vartheta -|p-q| \vartheta }}{1-b^{-|p-q| \vartheta }} \le C\,b^{L\max(p,q) \vartheta}\,.
\end{align*}
Since the error is of order $b^{-L \vartheta/2 }$ we have
\begin{equation*}
|{\mathcal I}_{\bf s}[{F}]-{\mathcal I}_{{\bf s},L}[{F}]|^2
\le C b^{-L \vartheta } \leq C L^{1_{p=q}/p} M^{-\min(1/p, 1/q)}.
\end{equation*}
 Since $M \ge b^{(L-1) p \vartheta }$, $L \le C \log M$, and hence, the second desired  result follows.
\end{proof}

In the next theorem, we state the error from approximating the  integral ${\mathcal I}_{\bf s} {F}$ in \eqref{If} by a QMC rule in dimension $s_1+s_2$ directly, without combining QMC with sparse grids. In this approach we combine the different weights arising from simulating $\mu$ and $\lambda$.
 The proof follows directly from \cite[Theorem 3.1]{DickKuoGiaNuynsSchwab2014}.
\begin{theorem}\label{prop:qmc6.3}
 Let $\gamma=\min(\lfloor 1/p \rfloor,\lfloor 1/q \rfloor) +1$, and let $\bchi$ and $\bvphi$ be the two sequences introduced in Theorem \ref{prop:qmc}.  For any $\bsr=(\bsy,\bsz)\in U_1\times U_2=[0,1]^{s_1+s_2}$ and any  $\boldsymbol{\gamma} \in \{0, 1, \ldots, \gamma\}^{s_1+s_2}$, assume that $F$ satisfies
 \begin{equation} \label{eq:like-norm mixed}
|\partial^{\boldsymbol{\gamma}}_{\bsr} {F}(\bsr)|=|\partial^{\boldsymbol{\gamma}_1,\boldsymbol{\gamma}_2}_{\bsy,\bsz} {F}(\bsy,\bsz)| \le c|\boldsymbol{\gamma}|! 
\bchi_{s_1}^{\boldsymbol{\gamma}_1} 
\bvphi_{s_2}^{\boldsymbol{\gamma}_2},  
\end{equation}
where the vectors $\boldsymbol{\gamma}_1$ and $\boldsymbol{\gamma}_2$ formed  of the first $s_1$ and last $s_2$ components of $\boldsymbol{\gamma},$ respectively, and  the constant $c$ is independent of $\bsr$, $s_1$, $s_2$, and of $p$ and $q.$ Then one can construct an interlaced polynomial lattice rules of order $\gamma$ with $N=b^m$ (for a given prime $b$ and a given positive integer $m$) points using a fast CBC algorithm,  with cost $\calO(\gamma\,(s_1+s_2) N(\log N+\gamma\,(s_1+s_2)))$  operations,  so that 
\[|{\mathcal I}_{\bf s} {F}-Q_{{\bf s},N} [{F}]|=\Big|{\mathcal I}_{\bf s} {F}-\frac{1}{N} \sum_{n=0}^{N-1} F(\boldsymbol{r}_n)\Big|\le C\,N^{-\min(1/p,1/q)}\,,\]
where the generic constant $C$ depends on $b,p$ and $q$, but is independent of ${\bf s}$ and $m.$ 
\end{theorem}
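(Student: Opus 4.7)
The plan is to reduce Theorem \ref{prop:qmc6.3} to a direct application of \cite[Theorem~3.1]{DickKuoGiaNuynsSchwab2014} in dimension $s_1+s_2$, using a single concatenated weight sequence. Unlike in Theorem \ref{prop:qmc}, where $\bsy$ and $\bsz$ are handled by two separate lattice rules and the mixed derivatives split into two univariate-type estimates, here the hypothesis \eqref{eq:like-norm mixed} gives a single bound on the full mixed derivative $\partial^{\boldsymbol{\gamma}}_{\bsr}F$, so the whole integrand fits directly into the standard higher-order QMC framework of the cited reference.

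Concretely, I would first introduce the concatenated weight sequence $\boldsymbol{\omega}=(\omega_1,\ldots,\omega_{s_1+s_2})$ defined by $\omega_j=\chi_j$ for $1\le j\le s_1$ and $\omega_{s_1+k}=\varphi_k$ for $1\le k\le s_2$. Then, for every $\boldsymbol{\gamma}\in\{0,1,\ldots,\gamma\}^{s_1+s_2}$ written as $(\boldsymbol{\gamma}_1,\boldsymbol{\gamma}_2)$, the hypothesis \eqref{eq:like-norm mixed} reads
\[
|\partial^{\boldsymbol{\gamma}}_{\bsr}F(\bsr)|\le c\,|\boldsymbol{\gamma}|!\,\boldsymbol{\omega}^{\boldsymbol{\gamma}},
\]
which is precisely the structural assumption required by \cite[Theorem~3.1]{DickKuoGiaNuynsSchwab2014} for an integrand on $[0,1]^{s_1+s_2}$.

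Next I would verify the summability condition needed to fix the interlacing factor. Set $r:=\max(p,q)\in(0,1)$. Since $\sum_j\chi_j^p<\infty$ and $\sum_j\varphi_j^q<\infty$ force $\chi_j,\varphi_j\to 0$, there exists $J$ such that $\chi_j,\varphi_j\le 1$ for $j\ge J$, so $\chi_j^r\le \chi_j^p$ and $\varphi_j^r\le \varphi_j^q$ beyond $J$, giving
\[
\sum_{j=1}^{\infty}\omega_j^r\le \sum_{j=1}^{s_1}\chi_j^r+\sum_{j=1}^{s_2}\varphi_j^r<\infty,
\]
uniformly in $s_1,s_2$. Because $\lfloor 1/r\rfloor+1=\lfloor 1/\max(p,q)\rfloor+1=\min(\lfloor 1/p\rfloor,\lfloor 1/q\rfloor)+1=\gamma$, the interlacing factor prescribed by \cite[Theorem~3.1]{DickKuoGiaNuynsSchwab2014} coincides with the $\gamma$ in the statement, and the achieved rate is $N^{-1/r}=N^{-\min(1/p,1/q)}$. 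The same theorem provides an interlaced polynomial lattice rule of that order with $N=b^m$ points, produced by the fast CBC algorithm in $\mathcal{O}(\gamma(s_1+s_2)N(\log N+\gamma(s_1+s_2)))$ operations, which is the stated complexity.

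The only point that requires a little care, rather than being fully automatic, is the $s_1,s_2$-independence of the implicit constant $C$. This comes down to checking that the weights entering the worst-case error bound in \cite{DickKuoGiaNuynsSchwab2014}, which are built from products of the $\omega_j$, remain bounded by a constant depending only on $b,p,q$ and the global $\ell^p$/$\ell^q$ sums of $\bchi$ and $\bvphi$. I expect this to be the main (though still routine) obstacle, handled exactly as in the proof of Theorem~\ref{prop:qmc} by absorbing the product weight constants into $C$ via the finiteness of $\sum_j\chi_j^p$ and $\sum_j\varphi_j^q$.
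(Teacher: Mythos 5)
Your proposal is correct and follows essentially the same route as the paper, which simply invokes \cite[Theorem~3.1]{DickKuoGiaNuynsSchwab2014} directly in dimension $s_1+s_2$; your concatenated weight sequence with exponent $\max(p,q)$ and the identity $\lfloor 1/\max(p,q)\rfloor+1=\min(\lfloor 1/p\rfloor,\lfloor 1/q\rfloor)+1=\gamma$ is exactly the verification the paper leaves implicit.
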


Although the theorem does not require the components of the interlaced polynomial lattice rule to be ordered in a certain way, in practice it is beneficial to order the components such that the early components of the polynomial lattice rule are applied to the most important coefficients in the expansions of $\lambda$ and $\mu$.

\section{Numerical experiments}\label{Sec: Numeric}
In this section, we illustrate numerically the theoretical finding in Theorem  \ref{main results}.  In all experiments,  $\Omega$ is chosen to be the unit square $[0,1]^2$, and  $\mathcal{T}_h$ is a family of uniform triangular meshes with diameter $\sqrt{2} h$ obtained from uniform $J$-by-$J$ square meshes by cutting each mesh square into two congruent triangles with $h=1/J.$  In all numerical experiments we set the base of the polynomial lattice rules $b = 2$.


{\bf Example 1:} In this example, we corroborate the finite element errors and convergence rates when $r=1$ (piecewise linear Galerkin FEM) the   Lam\'e parameters $\mu$ and $\lambda$ are  variable but deterministic.  We choose  \[\mu(x_1,x_2)=x_1+x_2+1\quad{\rm and}\quad  \lambda(x_1,x_2)=\sin(2\pi x_1)+2.\]

To illustrate the FEM convergence order in Theorem \ref{Convergence theorem} (or Theorem \ref{main results}), we choose the body force $\vf$ so that the exact solution is
\[\vu(x_1,x_2)=\left[\begin{matrix} 
u_1(x_1,x_2)\\ 
u_2(x_1,x_2)\end{matrix}\right]=\left[\begin{matrix} 
2(\cos(2\pi x_1)-1)\sin(2\pi x_2)\\ 
(1-\cos(2\pi x_2)) \sin(2\pi x_1)\end{matrix}\right]\,.\] 
Motivated by the equality 
\begin{equation}\label{Ritz}
\|\vv\|= \sup_{\vw\in {\bf L}^2(\Omega),\vw\ne {\bf 0}} \frac{|\iprod{\vv,\vw}|}{\|\vw\|},\quad{\rm for ~any}~~\vv \in {\bf L}^2(\Omega),
\end{equation}
we define, for some fixed (but arbitrary) $\vw\in {\bf L}^2(\Omega)$, the functional $\calL$ by: $\calL(\vv)= \calL_\vw(\vv):=\iprod{\vw,\vv}$. Then, by using the convergence estimate~\eqref{convergence calL} in Theorem \ref{Convergence theorem}, we have $|\iprod{\vu-\vu_h,\vw}|\le  C\, h^2 \|\vf\| \|\vw\|$ for $\vw\in {\bf L}^2(\Omega)\,.$ Consequently, the equality in \eqref{Ritz}  leads to the following optimal   ${\bf L}^2(\Omega)$ estimate: $E_h:=\|\vu-\vu_h\|\le C\,h^2 \|\vf\|\,.$ To demonstrate this numerically, we compute 
$E_{h}$ by approximating the  ${\bf L}^2$-norm ($\|\cdot\|$) using the centroids of the elements in the mesh $\mathcal{T}_h$. The empirical convergence rate (CR) is calculated by halving $h$, and thus, $\text{CR}=\log_2(E_h/E_{h/2}).$ 

If we choose $\vw={\bf 1}$ (the unitary constant vector function), then with $\vv=[v_1~~ v_2]^T,$
\[\calL(\vv)=\calL_{\bf 1}(\vv)=\int_{\Omega} \vv(\bsx)\cdot {\bf 1} \,d\bsx=\int_{\Omega} [v_1(\bsx)+v_2(\bsx)]\,d\bsx\,,\]
which is the mean of $\vv$ over $\Omega=[0,1]^2.$ Since $\|\calL_{\bf 1}\|\le 1,$  by Theorem \ref{Convergence theorem}, 
\[
|\calL(\vu-\vu_h)|=|\calL_{\bf 1}(\vu-\vu_h)|
= \Big|\sum_{i=1}^2\int_\Omega (u_i-u_{i_h})\,d\bsx\Big|\le  C\, h^2 \|\vf\|\,.\]

Again, we use the centroids of the elements in the mesh $\mathcal{T}_h$ to approximate the above integral. The reported  numerical (empirical) convergence rates in Table \ref{Tab 1} illustrate the expected second order of accuracy. For a graphical illustration of the efficiency of the approximate solution over the global domain $\Omega$, we highlight the pointwise nodal displacement errors in Figure \ref{Fig 2} for $J=60.$

\begin{table}[ht]
\begin{center}
\begin{tabular}{|c|cc|cc|cc|}
\hline
$J$&\multicolumn{2}{c|}{$\|\vu-\vu_h\|$~~~~~CR }
&\multicolumn{2}{c|}{$|\calL_{\bf 1}(\vu-\vu_h)|$~~~~~CR}\\
\hline
  8&   3.8533e-01 &          &   1.1697e-02 &         \\
 16&   1.1163e-01 &   1.7873 &   3.7017e-03 &  1.6599 \\
 32&   2.9204e-02 &   1.9345 &   9.8934e-04 &  1.9037 \\
 64&   7.3903e-03 &   1.9825 &   2.5179e-04 &  1.9743 \\
128&   1.8533e-03 &   1.9955 &   6.3238e-05 &  1.9933 \\  
      \hline
\end{tabular}     
\caption{Example 1, Errors  and empirical convergence rates for different values of $J.$}
\label{Tab 1}
\end{center}
\end{table}

\begin{figure}[ht]
\begin{center}
\includegraphics[width=6.5cm, height=6.5cm]{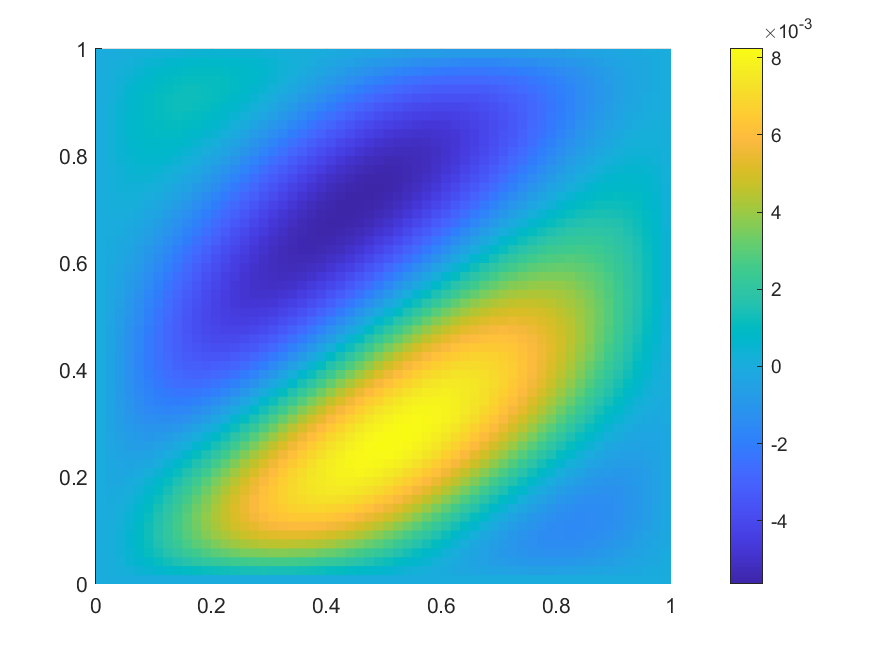}\includegraphics[width=6.5cm, height=6.5cm]{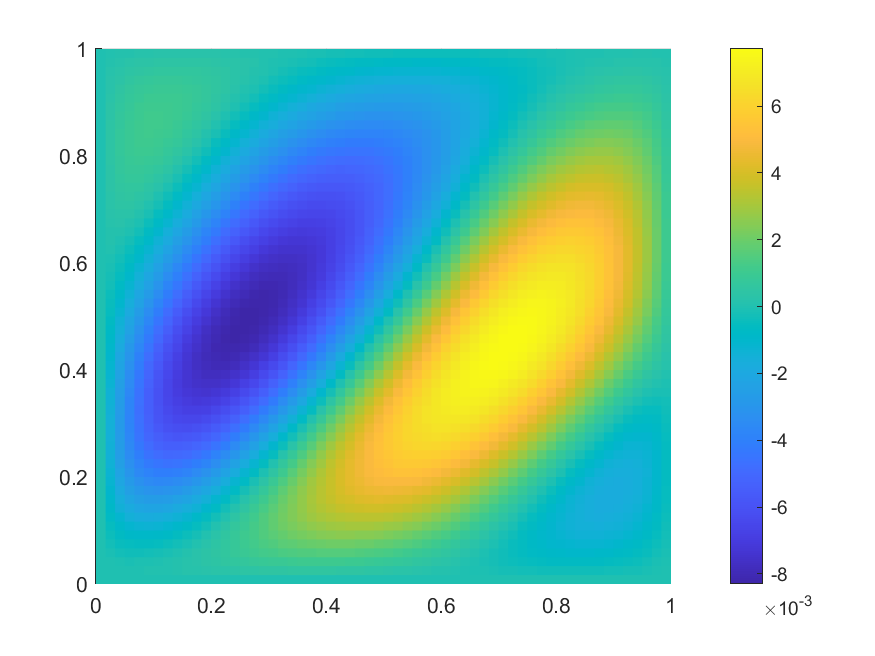}
\caption{Pointwise nodal errors in the displacement,   $|u_1-u_{1_h}|$ on right and $|u_2-u_{2_h}|$ on left. }
\label{Fig 2}
\end{center}
\end{figure}

{\bf Example 2:}  
 This example is devoted to confirm the QMC theoretical convergence results when $\mu$ is random and  $\lambda$ is constant. More precisely, $\lambda=1$ and  
\[
\mu(\bsx,\bsy)= \frac{1}{10}\Big(1 + \sum_{j=1}^\infty
 y_j \psi_j\Big),
\quad {\rm with}~~ \psi_j=\frac{1}{j^2} \sin (j \pi x_1) \sin((2j-1)\pi x_2),\]
and for $y_j\in [-1/2,1/2].$ Since  $\|\psi_j\|\le {1}/{(2j^2)},$ $\sum_{j=1}^\infty \|\psi_j\|^p$ is convergent for $p>1/2$ and  $\sum_{j=s_1+1}^\infty \|\psi_j\|\le C s_1^{-1}\,.$ Thus,  \eqref{ass A1} and \eqref{ass A3} are satisfied when $p>1/2$ and $\varrho_1=1/2,$ respectively. We discretize on the physical domain using the quadratic FEM. Therefore, according to  Theorem \ref{main results}, we expect the truncated QMC Galerkin finite element error to be of order $\calO(s_1^{-1}+\log(s_1)(N_1^{-2}+h^3))$. The  appearance of the logarithmic factor $\log(s_1)$ in front of $N_1^{-2}$ and $h^3$ is  due to the facts that $\sum_{j=1}^{s_1} \|\psi_j\|^{1/2}\le C \log(s_1)$ and that $\sum_{j=1}^{s_1} \|\nabla \psi_j\|\le C \log(s_1)$, respectively. For measuring the error, and since the exact solution is unknown, we rely on the reference solution $\Xi_{\vu_h^*}$ which is computed using  $s_1=256$, $J=128,$   and $1024$ QMC points.   Hence,  by ignoring the logarithmic factor $\log(s_1)$, and in the absence of the truncated series error,  we anticipate   $\calO(N_1^{-2})$-rates of convergence for $N_1\le J^{\frac{3}{2}}$, with $N_1 \ll 1024$. This is  illustrated numerically in Table \ref{tab:Example2} and graphically in Figure \ref{fig:errN example2} for different values of $N_1$, and for fixed $s_1=256$ and $J=128$, with $\calL=\calL_{\bf 1}$ and $\vf=(2x_1+10,x_2-3).$ 
Note that the middle column of Table~\ref{tab:Example2} displays
$|\Xi_{\vu^*_h}-\Xi_{{\vu_h}, Q}|$  where $Q$ is a quadrature for $\calL = \calL_1$ with $N_2=1$ and $N_1$ varying, see \eqref{eq:QMCInt F}.

\begin{table}[ht]
\centering
\begin{tabular}{|c|cc|}
\hline
$N_1$  &   { $|\Xi_{\vu^*_h}-\Xi_{{\vu_h}, Q}|$}~~ & CR\\
\hline
 8  &  3.1045e-01 &        \\
 16 &  6.1906e-02 & 2.3262 \\
 32 &  1.5191e-02 & 2.0269 \\
 64 &  4.3387e-03 & 1.8079 \\
128 &  9.3546e-04 & 2.2135 \\
256 &  2.5232e-04 & 1.8904 \\
\hline
\end{tabular}
\caption{Example 2, errors and convergence rates for different values of $N_1$.}
\label{tab:Example2}
\end{table}

\begin{figure}[ht]
    \centering
    \includegraphics[scale=0.5]{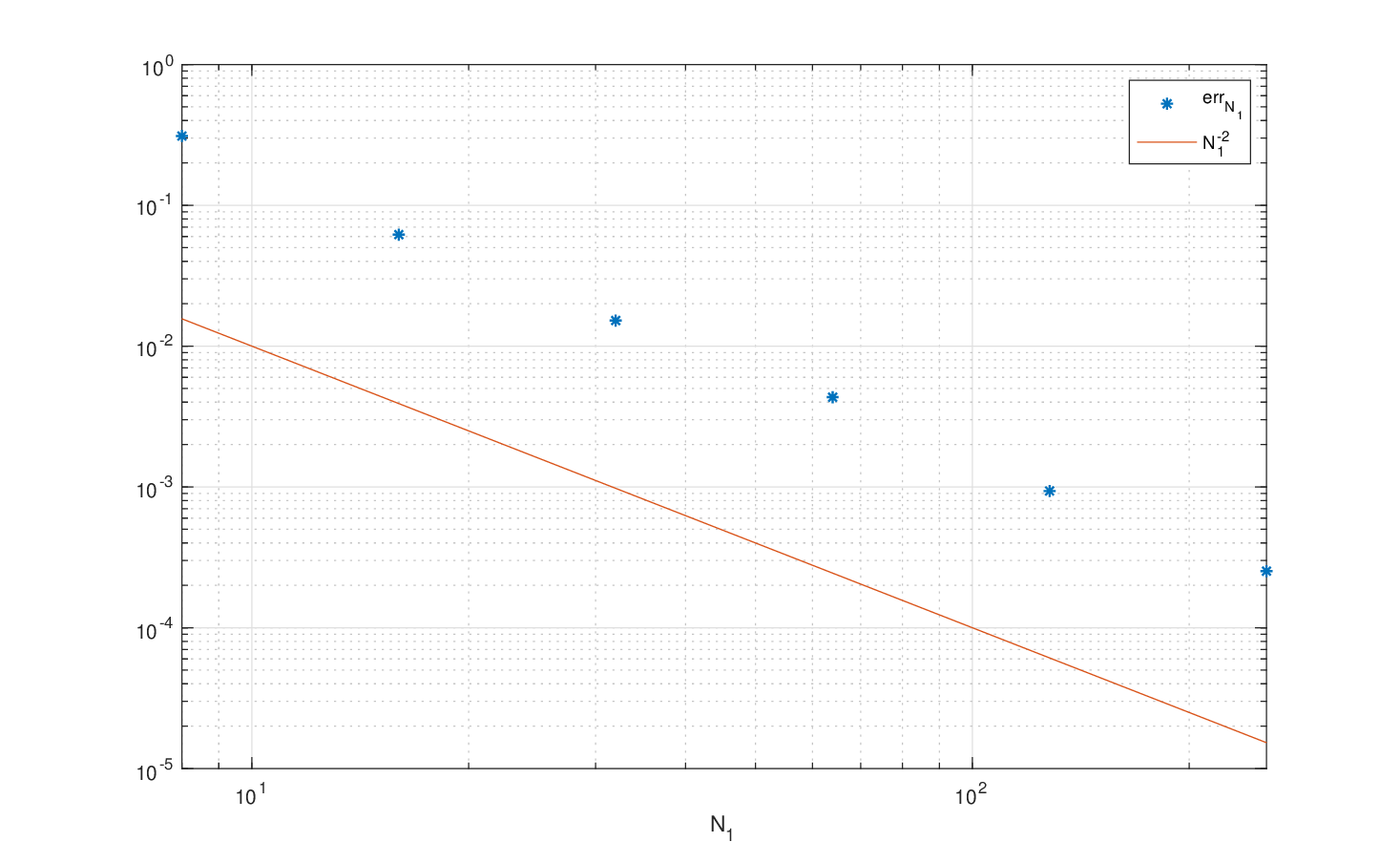}
    \caption{Numerical errors (err$_{N_1}$) vs. $N_1^{-2}$ for Example 2}
    \label{fig:errN example2}
\end{figure}

{\bf Example 3:}  In this example, we focus on the randomness in $\lambda$ while $\mu=1$. We choose 
\[
\lambda(\bsx,\bsz)= 1 + \sum_{j=1}^\infty
 \frac{z_j}{j^2} \sin (j \pi x_1) \sin((2j-1)\pi x_2),
\quad z_j \in [-1/2,1/2].
\]
By arguing as in the preceding example, based on  Theorem \ref{main results}, we fix  $s_2 =256$, $J=128$ and $r=2,$  then the QMC Galerkin finite element error is expected to be of order $\calO(N_2^{-2})$ whenever $N_2\le J$,  where the logarithmic factor $\log(s_2)$ is ignored.  We rely again on the reference solution $\Xi_{\vu_h^*}$, which is computed as in the previous example, in measuring the errors, and consequently, the convergence rates. As expected,  an  $\calO(N_2^{-2})$  convergence rate is illustrated tabularly and graphically for different values of $N_2$ in Table  \ref{tab:Example3}  and Figure \ref{fig:errN example3}, respectively, for fixed  $s_2=J=256,$ with $\calL=\calL_{\bf 1}$ and $\vf=(2x_1+10,x_2-3).$  
Note that the middle column of Table~\ref{tab:Example2} displays
$|\Xi_{\vu^*_h}-\Xi_{{\vu_h}, Q}|$ where $Q$ is a quadrature for $\calL = \calL_1$ with $N_1=1$ and $N_2$ varying, see \eqref{eq:QMCInt F}.
\begin{table}[ht]
    \centering
    \begin{tabular}{|c|cc|}
\hline    
$N_2$  &   { $|\Xi_{\vu^*_h}-\Xi_{{\vu_h}, Q}|$}~~ & CR\\
\hline
 8  &  7.5520e-04 &  \\
 16 &  2.0011e-04 & 1.9161 \\
 32 &  4.5223e-05 & 2.1457 \\
 64 &  1.1630e-05 & 1.9592 \\
128 &  2.7057e-06 & 2.1038 \\
256 &  6.5202e-07 &  2.0530 \\
\hline
\end{tabular}
\caption{Example 3, errors and convergence rates for different values of $N_2$.}
\label{tab:Example3}
\end{table}

\begin{figure}[ht]
    \centering
    \includegraphics[scale=0.5]{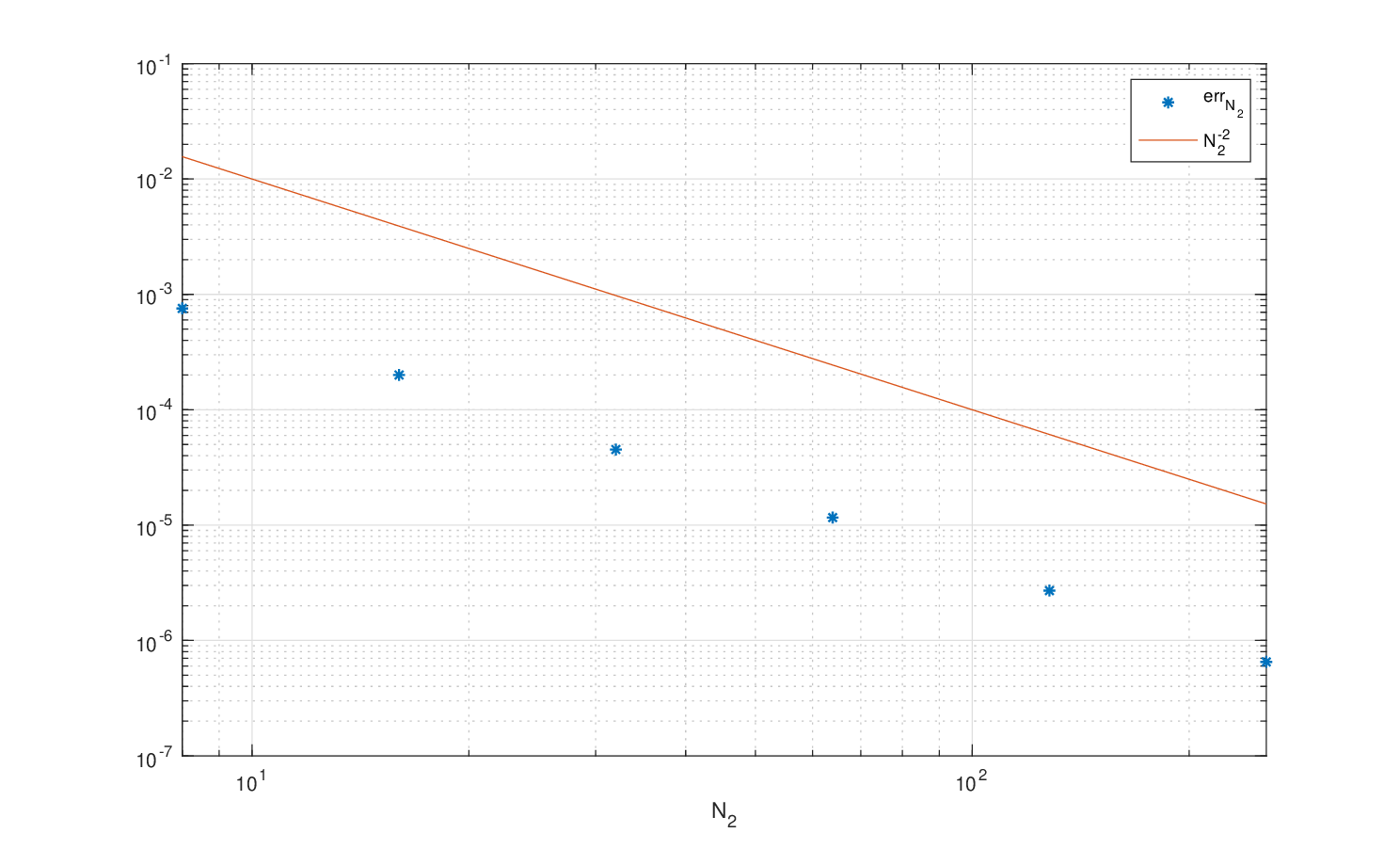}
    \caption{Numerical errors (err$_{N_2}$) vs. $N_2^{-2}$ for Example 3}
    \label{fig:errN example3}
\end{figure}

{\bf Example 4:} The aim behind this example is to illustrate numerically the achieved direct $(s_1+s_2)$-dimensional QMC  (second part of Theorem \ref{main results} or Theorem~\ref{prop:qmc6.3}) and the  QMC sparse grid  (third part of  Theorem \ref{main results} or Theorem ~\ref{qmc sparse grid}) convergence results.  As before,  $\calL=\calL_{\bf 1}$ and we set the body force $\vf=(2x_1+10,x_2-3)$ but now both coefficients $\lambda$ and $\mu$ are random. We choose 
\begin{align*}
    \mu(\bsx,\bsy)&= 1 + \sum_{j=1}^\infty
 \frac{y_j}{j^2} \sin (j \pi x_1) \sin((2j-1)\pi x_2),
\quad y_j \in [-1/2,1/2],\\
\lambda(\bsx,\bsz)&= 1 + \sum_{j=1}^\infty
 \frac{z_j}{j^2} \sin (j \pi x_1) \sin((2j-1)\pi x_2),
\quad z_j \in [-1/2,1/2],
\end{align*}
and so,  $p=q=1/2$ (note that strictly speaking we have $p=q=1/2+\epsilon$ for an arbitrary $\epsilon > 0$; in order to simplify the computation we ignore this technicality in the following). We fix the truncation degree $s_1 = s_2 =  256$, the spatial mesh element size $J=128,$ and the degree of the Galerkin FEM $r=2.$ The reference solution $\Xi_{\vu_h^*}$ is generated using a full grid of $2048 \times 2048$ (that is, $b=2$ and $m_1=m_2=11$) high-order QMC points (generated by a Python package in \cite{Gantner2014}). Note that the PDE solvers can be run in parallel for distinct QMC points.  To speed up the computation, finite element PDE solvers based on examples in the FEniCS package \cite{LangtangenLogg2017} are used on the high-performance computing platform Katana \cite{Katana} provided by UNSW, Sydney. The Python code used in the numerical experiments together with the PBS scripts is available at \url{https://github.com/qlegia/Elasticity-HigherOrder-QMC}.

The one family of QMC sparse grid algorithm (discussed in Theorem~\ref{prop:qmc6.3}) is implemented to compute $\Xi_{\vu_h,Q_N}$ where $N$ is the total number of QMC points. For different values of $N$, the errors between approximation $\Xi_{\vu_h,Q_N}$ and the reference solution are given in the second column of Table~\ref{tab:example 4 Theorem 6.3}. The expected $O(N^{-2})$-rates of convergence is illustrated numerically in the third column. 

The combined QMC sparse grid algorithm \eqref{eq: truncate infinite sums}  (with ${\bf N}^{L-k,k}=(2^{L-k},2^k)$, that is, $\vartheta = 2$) is implemented to compute $\Xi_{\vu_h,Q_L}$. The errors between approximation $\Xi_{\vu_h,Q_L}$ and the reference solution for different values of $L$  are given in the second column of Table~\ref{tab:sparse grid results}. The fourth column of the table gives the QMC sparse grid upper error bounds $(\log\,M) M^{-1}$ predicted by Theorem ~\ref{qmc sparse grid} (where the constant $C$ in the error bound  is ignored). 
\begin{table}[]
    \centering
    \begin{tabular}{|c|c|c|}
    \hline   
 $N$  &  $|\Xi_{\vu_h^*}-\Xi_{\vu_h,Q_N}|$ & CR \\
 \hline
  256  & 6.4537e-07  & \\
  512  & 1.5738e-07  & 2.0359\\
 1024  & 2.8466e-08  & 2.4670\\
 2048  & 9.8881e-09  & 1.5255\\
 4096  & 2.2407e-09  & 2.1417\\
 8192  & 4.9127e-10  & 2.1894 \\
 \hline
    \end{tabular}
    \caption{Example 4 using one family of QMC rule as in Theorem~\ref{prop:qmc6.3}}
    \label{tab:example 4 Theorem 6.3}
\end{table}

\begin{table}[]
    \centering
    \begin{tabular}{|l|c|c|c|c|c|}
    \hline
    $L$  &  $M=(L-1)2^L$ & $|\Xi_{\vu_h,Q_L} - \Xi_{\vu_h^*}|$~~~ & $(\log \, M) M^{-1}$ \\
    \hline
    9  & 4096   & 3.5687e-06 &2.0307e-03\\
    10 & 9216   & 3.4606e-06 &9.9053e-04\\
    11 & 20480  & 7.4782e-06 &4.8473e-04\\
    12 & 45056  & 1.1456e-06 &2.3783e-04\\
    13 & 98304  & 6.8586e-08 &1.1694e-04\\
    14 & 212992 & 1.4453e-07 &5.7603e-05\\
    15 & 458752 & 5.4749e-08 &2.8417e-05\\
    \hline
    \end{tabular}
    \caption{Example 4, numerical and theoretical error results for QMC sparse grid algorithm.}
    \label{tab:sparse grid results}
\end{table}

\end{document}